\newtheorem{thm}{Theorem}[section]
\newtheorem{defn}[thm]{Definition}
\newtheorem{prop}[thm]{Proposition}
\newtheorem{cor}[thm]{Corollary}
\newtheorem{rmk}[thm]{Remark}
\newtheorem{lma}[thm]{Lemma}
\newtheorem{exm}[thm]{Example}
\newcommand{\eq}{\begin{equation}}
\newcommand{\qe}{\end{equation}}
\def\N{{\rm I\kern-0.16em N}}
\def\R{{\rm I\kern-0.16em R}}
\def\E{\mathbb{E}}
\def\P{{\rm I\kern-0.16em P}}
\def\F{{\rm I\kern-0.16em F}}
\def\B{{\rm I\kern-0.16em B}}
\def\C{{\rm I\kern-0.46em C}}
\def\G{{\rm I\kern-0.50em G}}
\def\Z{{\mathbb{Z}}}
\numberwithin{equation}{section}
\begin{document}

\normalem 

\title{On infinite covariance expansions}

\date{}

%
%
%

\renewcommand{\thefootnote}{\fnsymbol{footnote}}
\begin{small}
  \author{Marie Ernst\footnotemark[1], \, 
Gesine Reinert\footnotemark[2] \, and Yvik Swan\footnotemark[1]}
\end{small}
\footnotetext[1]{Universit\'e de Li\`ege, corresponding author Yvik
  Swan: yswan@uliege.be.}
\footnotetext[2]{University of Oxford.}

\maketitle

\begin{abstract}
  In this paper we provide a probabilistic representation of
  Lagrange's identity which we use to obtain Papathanasiou-type
  variance expansions of arbitrary order. Our expansions lead to
  generalized sequences of weights which depend on an arbitrarily
  chosen sequence of (non-decreasing) test functions.  The expansions
  hold for arbitrary univariate target distribution under 
  weak assumptions, in particular they hold for continuous and
  discrete distributions alike.  The weights are studied under
  different sets of assumptions either on the test functions or on the
  underlying distributions. Many concrete illustrations for standard
  probability distributions are provided (including Pearson, Ord,
  Laplace, Rayleigh, Cauchy, and Levy distributions).

  \medskip
  
\noindent   \textbf{Keywords}: {Covariance expansion, Laplace identity, Stein's method}
\end{abstract}


\section{Introduction}
\label{sec:introduction}


The starting point of this paper is the famous Gaussian expansion
which states that if $N \sim \mathcal{N}(0, 1)$, then
  \begin{align}
&    \mathrm{Var}[g(N)] = \sum_{k=1}^{\infty} \frac{(-1)^{k+1}}{k!}
                                                       \mathbb{E}\left[g^{(k)}(N)^2\right]
\label{eq:104}
  \end{align}
  for all smooth functions $g:\R\to\R$ such that all the expectations
  exist. Expansion \eqref{eq:104}{, whose first order term yields
    an upper variance bound generalizing Chernoff's famous Gaussian
    bound from \cite{C81}}, has been obtained in a number of different
  (and often non equivalent) ways. It is proved in \cite{HK95} via
  orthogonality properties of Hermite polynomials, and extensions to
  multivariate and infinite dimensional settings are given in
  \cite{HP95,houdre1998interpolation}. Chen uses martingale and
  stochastic integrals to obtain a
   general version of
  \eqref{eq:104} (also valid on certain manifolds) in
  \cite{chen1985poincare}.  The expansion is contextualized in
  \cite{Le95} through properties of the Ornstein-Uhlenbeck operator,
  and it is also shown in that paper that the semi-group arguments
  carry through to non-Gaussian target distributions under 
  general assumptions.  A very general approach to this line of
  research can be found in \cite{houdre1998interpolation} where
  similar expansions are obtained by means of an iteration of an
  interpolation formula for infinitely divisible distributions. The
  main difference between the univariate standard Gaussian and the
  general non-Gaussian target is that the explicit weight sequence and
  simple iterated derivatives appearing in \eqref{eq:104} need to be
  replaced by some well-chosen iterated gradients with weight
  sequences which can be quite difficult to obtain explicitly (for
  instance Ledoux' sequence from \cite{Le95} is an iteration of the
  ``carr\'e du champ'' operator).

  The above references are predated by
  \cite{papathanasiou1988variance} wherein a general version of
  \eqref{eq:104} (valid for arbitrary continuous target distributions)
  is obtained through elementary arguments relying on an iteration of the
  exact Cauchy-Schwarz equality (via the so-called \emph{Mohr and Noll
    identity} from \cite{mohr1952bemerkung}) combined with the
  Lagrange identity for integrals due to \cite{CaPa85}.
  Papathanasiou's method of proof is extended in \cite{APP07} to
  encompass discrete distributions. Both the continuous and discrete
  expansions are of the same form as \eqref{eq:104}, although the
  weight sequence $(-1)^k/k!$ is replaced with a target-specific
  explicit sequence of weights (see equations \eqref{eq:101} and
  \eqref{eq:4} below).  To set the scene, we borrow notation from
  \cite{ERS19vb1} which allows to unify the presentation of the
  results from \cite{papathanasiou1988variance} and \cite{APP07} and shall be
  used throughout this paper.

  \medskip
  
  \noindent \textbf{Notation:} For a function $f: \R \rightarrow \R$
  let $\Delta^{\ell}f(x) = (f(x+\ell) - f(x))/\ell$ for all
  $\ell \in \{-1,0,1\}$, with the convention that
  $\Delta^0f(x) = f'(x)$, with $f'(x)$ the weak derivative defined
  Lebesgue almost everywhere.  The case $\ell=0$ is referred to as the
  \emph{continuous case} and $\ell \in \left\{ -1,1 \right\}$ is
  referred to as the \emph{discrete case}.  
  For a real-valued function $f$, in the continuous case $f^{(k)}$
  denotes its $k^{th}$ derivative; discrete higher order derivatives
  $f^{(k)}$ are obtained by iterating the forward derivative
  $\Delta^+f(x) = f(x+1) - f(x)$.  We use the rising and falling
  factorial notation
  \begin{align}
    \label{eq:discrprod1}
    &    f^{[k]}(x)
       =  \prod_{j=0}^{k-1} f(x+j)
\mbox{ and }
      f_{[k]}(x) 
      = \prod_{j=0}^{k-1} f(x-j)
,  
  \end{align}
  with the convention that $f^{[0]}(x)= f_{[0]}(x) =1$.

  \medskip 
    
 Expansion
  \eqref{eq:104} can then be seen as a particular instance of the
  following result (see   \cite[Theorem 1 and Corollary 1]{papathanasiou1988variance}  and
  \cite[Theorem~3.1]{APP07}).
  
  \begin{thm}[Papathanasiou's expansion]\label{thm:papaclass} 
    Let $X$ be a random variable with finite {$(n+2)^{th}$}
    moments. Let $g$ be a real-valued function with finite variance
    with respect to $X$. Then
   \begin{equation}
    \label{eq:100}
    \mathrm{Var}[g(X)] = \sum_{k=1}^{n} (-1)^{k-1}
    \mathbb{E} \left[ (g^{(k)}(X))^2 \Gamma_{k}(X) \right] + (-1)^{n}R_n
  \end{equation}
  where $R_n$ is a {non-negative} remainder term 
  and 
  $\Gamma_k$ depend on  the type of distribution, as follows. 
  \begin{enumerate}
  \item  If $X$ is a real random variable with continuous probability
    density function (pdf) $p$, then 
   the  weights are
  \begin{align}
    \label{eq:101}
    \Gamma_{k}(t) = \frac{(-1)^{k-1}}{k!(k-1)!p(t)}\bigg( 
  \mathbb{E}\left[(X-t)^{k}\right] \int_{-\infty}^t 
    (x-t)^{k-1} p(x) dx - \mathbb{E}\left[(X-t)^{k-1}\right] \int_{-\infty}^t
      (x-t)^{k}p(x) dx\bigg),
  \end{align}
  defined for all $t$ such that $p(t)>0$. 

 \item If $X$ is an integer-valued r.v.\ with probability mass
   function (pmf) $p$, then 
   the weights are
  \begin{align}
    \label{eq:4}
    \Gamma_{k}(t) &= \frac{(-1)^{k-1}}{k!(k-1)!p(t)} \bigg(
    \mathbb{E}\left[(X-t)_{[k]}\right] \sum_{x<t+1} p(x) (x-(t+1))_{[k-1]} \nonumber\\
  &  \quad -  \mathbb{E}\left[(X-(t+1))_{[k-1]}\right] \sum_{x<t} p(x) (x-t)_{[k]} \bigg)  , 
  \end{align}
  defined for all $t$ such that $p(t)>0$. 

  \end{enumerate} 
\end{thm}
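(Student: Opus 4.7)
The plan is to prove the expansion by induction on $n$, through iterated application of the Cauchy--Schwarz (Mohr--Noll) identity combined with the Lagrange identity of \cite{CaPa85} alluded to in the introduction. The base case $n=1$ is essentially the Cacoullos--Papathanasiou variance bound: starting from the covariance representation $\mathrm{Var}[g(X)] = \tfrac{1}{2}\mathbb{E}[(g(X)-g(X'))^2]$ with $X'$ an independent copy of $X$, I would write $g(X) - g(X')$ as an integral of $g'$ in the continuous case, or as a telescoping sum of the forward difference $\Delta^+ g$ in the discrete case. Exchanging the order of integration/summation (justified by the moment assumption) yields $\mathrm{Var}[g(X)] = \mathbb{E}[g'(X)^2 \Gamma_1(X)] - R_1$, where $\Gamma_1$ coincides with the $k=1$ instance of \eqref{eq:101} or \eqref{eq:4} and $R_1 \geq 0$ appears as a sum of squares produced by Lagrange's identity.

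For the inductive step, the key observation is that $R_1$ itself has the shape of a weighted quadratic functional of $g'$, so that the same first-order identity can be applied with $g$ replaced by $g'$, producing a new contribution $-\mathbb{E}[(g^{(2)}(X))^2 \Gamma_2(X)]$ together with a new non-negative remainder. Iterating this procedure $n$ times produces the alternating sum \eqref{eq:100}. To close the argument, I would verify by a direct computation that the weights $\Gamma_k$ generated by the recursion agree with the explicit closed-form expressions in \eqref{eq:101} and \eqref{eq:4}; this identification comes out of combining the two pieces generated by each application of the Lagrange identity, one carrying a factor $\mathbb{E}[(X-t)^k]$ and the other an incomplete moment $\int_{-\infty}^t (x-t)^{k-1} p(x)\, dx$ (respectively its discrete counterpart $\sum_{x<t+1} (x-(t+1))_{[k-1]} p(x)$).

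The continuous and discrete cases run in parallel, the only substantive differences being that derivatives are replaced by forward differences, ordinary powers by falling factorials, integrals by sums, and the shifts by one carefully tracked --- which is precisely what produces the $t+1$ in \eqref{eq:4} where \eqref{eq:101} simply has $t$. The $(n+2)^{\text{th}}$ moment hypothesis is what justifies all the Fubini interchanges and the integration/summation-by-parts manipulations at every stage. The main obstacle I anticipate is the inductive repackaging of the remainder: showing that at each step $R_k$ can again be cast as a covariance-type quantity amenable to the same Lagrange manipulation, and keeping the bookkeeping clean enough to read off the closed form of $\Gamma_k$. Non-negativity of $R_n$, by contrast, is essentially automatic, since every invocation of Lagrange's identity contributes a non-negative square to the residual term.
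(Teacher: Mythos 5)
Your strategy --- the covariance representation $\mathrm{Var}[g(X)]=\tfrac12\E[(g(X)-g(X'))^2]$, a first application of Lagrange's identity giving the first-order term plus a non-negative remainder, iteration of the same identity inside the remainder, and a final closed-form identification of the weights --- is essentially the route the paper takes: Theorem \ref{thm:papaclass} is recovered there as the special case $f=g$, $h=\mathrm{Id}$ (with $\ell=-1$ in the discrete case) of Theorem \ref{thm:var-}, whose proof rests on Lemma \ref{lma:var1}, the probabilistic Lagrange identity of Lemma \ref{lma:matrixcauchyschwarz} applied conditionally on the outer pair at each induction step, and the weight identifications of Lemmas \ref{lma:gamma2cont} and \ref{lma:identitycontdisc} together with \eqref{eq:57cont2}, which yields \eqref{eq:101}. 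The only point to keep straight when fleshing this out is that the iteration is applied not to $\mathrm{Var}[g'(X)]$ but to the remainder conditionally on the endpoints $X_1<X_2$ inside the kernel (this is precisely why $\Gamma_2\neq\Gamma_1$); since you explicitly flag this repackaging as the main bookkeeping task, your outline is sound and matches the paper's argument.
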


It is not hard to show that when $X \sim \mathcal{N}(0,1)$, the weight
sequence \eqref{eq:101} simplifies to {$\Gamma_k(t) = {1}/{k!}$}
so that \eqref{eq:100} indeed contains \eqref{eq:104}. More generally,
it is shown in \cite{johnson1993note} that if $p$ belongs to the
Integrated Pearson (IP) system of distributions (see Definition
\ref{defn:lagr-ident-infin-1}) then the weights take on a particularly
agreeable form, namely
$ \Gamma_{k}(t) = {\Gamma_1(x)^k}/{(k! \prod_{j=0}^k(1 - j \delta))}$
and $\delta =\Gamma_1''(x)$ (which is constant if $X$ is Integrated
Pearson); many familiar univariate distributions belong to the IP
system, {such as} the normal, beta, gamma, {and} Student
{distributions}.
Similarly as in the continuous case, it is shown by \cite[Corollary
4.1]{APP07} that if $X$ belongs to the {cumulative Ord family} with
parameter $(\delta, \beta, \gamma)$ defined in Definition
\ref{defn:lagr-ident-infin-1ORD}, then the weights in \eqref{eq:4} are
 $\Gamma_{k}(t) = \Gamma_1^{[k]}(t)/\big(k! \prod_{j=0}^k(1 - j
  \delta)\big)$.
  Like its continuous counterpart, the discrete IP
system also contains many familiar univariate distributions such as
the binomial, Poisson and geometric distributions.


The list of references presented so far is anything but exhaustive
and 
expansions inspired from \eqref{eq:104} have attracted a lot of
attention over the years, e.g.\ with extensions to matrix inequalities
as in \cite{olkin2005matrix,wei2009covariance,afendras2011matrix}, to
stable distributions \cite{koldobsky1996inequalities}, to Bernoulli
random vectors \cite{BGH01}; more references shall be provided in the
text.  Aside from their intrinsic interest, 
they have many applications and are closely connected to a wide
variety of profound mathematical questions. For statistical inference
purposes, they can be used in the study of the variance of classes of
estimators (see e.g.\ \cite[section 5]{APP07}), of copulas
(\cite{cuadras2008eigenanalysis}), for problems related to
superconcentration (\cite{chatterjee2014superconcentration} and 
  \cite{tanguy2017quelques}) or for the study of correlation inequalities
\cite{houdre1998interpolation} and \cite{blazquez2014maximal}. These expansions
can also interpreted as refined log-Sobolev, Poincar\'e or
isoperimetric inequalities, see \cite{saumard2018weighted}. The
weights appearing in the first order ($n=1$) bounds are crucial
quantities in Stein's method \cite{fathi2018stein,ledoux2015stein} and
their higher order extensions are closely connected to eigenvalues and
eigenfunctions of certain differential operators
\cite{chen1985poincare}. 


\medskip

In the present paper, we combine the method from
\cite{papathanasiou1988variance,APP07} with intuition from \cite{K85}
{(and our recent work \cite{ERS19vb1})} to unify and extend the results
from Theorem \ref{thm:papaclass}
to arbitrary targets under {very weak}
assumptions. The result is
given in Theorem~\ref{thm:var-} and can be briefly sketched in a
simplified form as follows.  Fix $(\ell_k)_{k\ge1}$ a sequence either
in $\left\{ -1, 1 \right\}$ or $\left\{ 0 \right\}$ and let
$h: \R \to \R$ be such that {${\Delta^{-\ell_i} h}\ge0$ for all
  $i\ge 1$. Starting with some functions $f, g: \R \to \R$, we
  recursively define the sequence $({f}_k)_{k\ge 0}$ (resp.,
  $({g}_k)_{k\ge 0}$) by ${f}_0(x)={f}(x)$ (resp., ${g}_0(x)={f}(x)$)
  and ${f}_{i}(x)={\Delta^{-\ell}{f}_{i-1}(x)}/{\Delta^{-\ell}h(x)}$
  (resp.,
  ${g}_{i}(x)={\Delta^{-\ell}{g}_{i-1}(x)}/{\Delta^{-\ell}h(x)}$) for
  all $x \in \mathcal{S}(p)$. Then, for all $n \ge 1$, it holds that
if the expectations below are finite then}
\begin{equation}\label{eq:3}
    \mathrm{Cov} \left[f(X), g(X)  \right]
          = 
 \sum_{k=1}^n (-1)^{k-1} \E\left[ 
                      \Delta^{-\ell_k} {f}_{k-1}(X)
                                    \Delta^{-\ell_k} {g}_{k-1}(X) 
 \frac{\Gamma_k^{
                    \ell}({h})(X) }{\Delta^{-\ell}
                        h(X)} 
                               \right] 
   + (-1)^n R_n^{\ell}({h})
\end{equation}
where the weight sequences $\Gamma_k^{ \ell}({h})$ as well as the
non-negative remainder term $R_n^{\ell}({h})$ are given explicitly
(see Theorem \ref{thm:var-}) and in many cases {{have a simple form}}
(see Section \ref{sec:weights-expansions}).  The expansions from
Theorem~\ref{thm:papaclass} are recovered by setting $f=g$, and
$h(x) = \mathrm{Id}(x)$ (the identity function) and, in the discrete
case, $\ell = -1$.
Far from obscuring {the} message, expansion \eqref{eq:3}, and its more
general form provided in Theorem \ref{thm:var-}, shed new light on the
expansion \eqref{eq:100} and its available extensions
 by 
 bringing a new interpretation to the weight sequences in terms of
 explicit iterated integrals and sums. This is the topic of
 Section \ref{sec:weights-expansions}. Our results also inscribe the
 topic within a context which is familiar to practitioners of the
 famous Stein's method.
This last connection nevertheless remains slightly mysterious and will
be studied in detail in future contributions.

\medskip

  
{The paper is organised as follows.} In Section \ref{sec:lagr-ident-infin} we provide the main results in
their most abstract form. After setting up the notations (inherited
mainly from 
\cite{ERS19vb1}), Section \ref{lma:var1} contains
the crucial Lagrange identity (Lemma \ref{lma:matrixcauchyschwarz})
and Section \ref{sec:main-result} contains the Papathanassiou-type
expansion (Theorem \ref{thm:var-}).  In Section
\ref{sec:weights-expansions} we provide illustrations by rewriting the
weights appearing in Theorem \ref{thm:var-} under different sets of assumptions. First,
in Section~\ref{sec:gener-cons-cont} we consider a general weighting
function $h$; next, in Section \ref{sec:some-gener-cons} we choose
certain specific intuitively attractive $h$-functions (namely the
identity, the cdf and the score); finally in Section
\ref{sec:illustrations-1} we obtain explicit expressions for various
illustrative distributions (here in particular  the connection with
existing literature on the topic is also made). For the sake or
readability,  all proofs are relegated to an Appendix.

\section{Infinite matrix-covariance expansions}
\label{sec:lagr-ident-infin}

We begin this paper by recalling some elements of the setup from our
paper \cite{ERS19vb1}.
Let $\mathcal{X} \subset \R$ and equip it with some $\sigma$-algebra
$\mathcal{A}$ and $\sigma$-finite measure $\mu$. Let $X$ be a random
variable on $\mathcal{X}$, with probability measure $P^X$ which is
absolutely continuous with respect to $\mu$; we denote $p$ the
corresponding probability density, and its support by
$\mathcal{S}(p) = \left\{ x \in \mathcal{X} : p(x)>0\right\}$.  As
usual, $L^1(p)$ is the collection of all real valued functions $f$
such that $\mathbb{E}|f(X)| < \infty$. Although we could in principle
keep the discussion to come very general, in order to make the paper
more concrete and readable in the sequel we shall restrict our
attention to distributions satisfying the following Assumption.

\

\noindent \textbf{Assumption A.} The measure $\mu$ is either the
counting measure on $\mathcal{X} = \Z$ or the Lebesgue measure on
$\mathcal{X} = \R$. If $\mu$ is the counting measure then there exist
$a, b \in \Z \cup \left\{-\infty, \infty \right\}$ such that
$\mathcal{S}(p) = [a, b]\cap \N$.  If $\mu$ is the Lebesgue measure
then there exist
$a, b \in \Z \cup \left\{-\infty, \infty \right\}$ such  that
$\overline{\mathcal{S}(p)} = [a, b]$.

\

    

 We denote $\mathrm{dom}(\Delta^{\ell})$
    the collection of functions $f : \R \to \R$ such that
    $\Delta^{\ell}f(x)$ exists and is finite {$\mu$-almost surely on
      $\mathcal{X}$}. {If $\ell=0$, this corresponds to all absolutely
      continuous functions; if $\ell = \pm1$ the domain is the
      collection of all functions on $\Z$}.  Let
    $\ell \in \left\{ -1, 0, 1 \right\}$.  Still following
    \cite{ERS19vb1} we also define
 \begin{equation}
   \label{eq:6}
   a_{\ell} = \mathbb{I}[\ell = 1] \mbox{ and } b_{\ell} = a_{-\ell} =
   \mathbb{I}[\ell = -1]
 \end{equation}
as well as 
 the  generalized indicator function
\begin{equation}
   \label{eq:kerneldeff}
   \chi^{\ell}(x, y) = \mathbb{I}{[x \le y - a_{\ell}]}
 \end{equation}
{which is defined with the obvious strict inequalities also for $x = -\infty$ and $y = \infty$,} and
\begin{align}
  \label{eq:29}
  \Phi^{\ell}_p(u, x, v) = \chi^{\ell}(u, x)\chi^{-\ell}(x, v)/p(x)
\end{align}
for all $u, v \in \mathcal{S}(p)$ (note that
$ \Phi^{\ell}_p(u, x, v)=0$ for $u > v$). The following result is
immediate but useful:
\begin{lma}
 \label{lem:chirules} For all $x, y$, it holds that 
  $\chi^{\ell^2}(x,y) + \chi^{\ell^2}(y,x) = 1 + \mathbb{I}[\ell
  =0]\mathbb{I}[x=y] - \mathbb{I}[\ell \ne 0] \mathbb{I}[x=y].$
  Moreover,
\begin{align}\label{eq:76}
\chi^\ell (u,y)  \chi^\ell (v,y) =  \chi^\ell ({\max(u,v)},y)  \mbox{ and } 
 \chi^\ell (x,u) \chi^\ell (x,v)  =  \chi^\ell (x,{\min(u,v)}).
\end{align}

\end{lma}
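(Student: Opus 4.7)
The plan is to proceed by unfolding the definitions \eqref{eq:6} and \eqref{eq:kerneldeff} and splitting on the value of $\ell \in \{-1,0,1\}$. Since all quantities are products or sums of indicator functions, no analytic machinery will be needed; the proof reduces to case checking, so the only ``obstacle'' is keeping the bookkeeping of the shift $a_\ell$ straight.

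For the first identity, I would begin by observing that $\ell^2 \in \{0,1\}$ and consequently $a_{\ell^2} = \mathbb{I}[\ell^2 = 1] = \mathbb{I}[\ell \ne 0]$. Thus $\chi^{\ell^2}(x,y) = \mathbb{I}[x \le y]$ when $\ell = 0$ and $\chi^{\ell^2}(x,y) = \mathbb{I}[x \le y-1] = \mathbb{I}[x < y]$ when $\ell \in \{-1,1\}$. I would then compute $\chi^{\ell^2}(x,y) + \chi^{\ell^2}(y,x)$ in each of the two cases: in the continuous case it equals $\mathbb{I}[x \le y] + \mathbb{I}[y \le x] = 1 + \mathbb{I}[x=y]$, matching the right hand side since the second indicator on the right vanishes; in the discrete case it equals $\mathbb{I}[x<y] + \mathbb{I}[y<x] = 1 - \mathbb{I}[x=y]$, again matching the right hand side since now the first indicator on the right vanishes.

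For the two factorisation identities in \eqref{eq:76}, I would use the elementary facts that for any real constant $c$ and any reals $u,v,x$,
\[
\mathbb{I}[u \le c]\,\mathbb{I}[v \le c] = \mathbb{I}[\max(u,v) \le c] \quad \text{and} \quad \mathbb{I}[x \le u]\,\mathbb{I}[x \le v] = \mathbb{I}[x \le \min(u,v)].
\]
Applying the first with $c = y - a_\ell$ to the product $\chi^\ell(u,y)\chi^\ell(v,y) = \mathbb{I}[u \le y - a_\ell]\,\mathbb{I}[v \le y - a_\ell]$ yields $\mathbb{I}[\max(u,v) \le y - a_\ell] = \chi^\ell(\max(u,v),y)$. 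The second identity is obtained analogously by applying the $\min$ rule to $\chi^\ell(x,u)\chi^\ell(x,v)$. The conventions at $\pm\infty$ mentioned just after \eqref{eq:kerneldeff} ensure that the endpoint cases cause no trouble, and no assumption on $p$ is used, as expected from the purely set-theoretic nature of the statement.
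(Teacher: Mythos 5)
Your proposal is correct and matches the paper's treatment: the paper offers no written proof (it declares the lemma ``immediate''), and your case-by-case unfolding of the definitions \eqref{eq:6}--\eqref{eq:kerneldeff}, using $a_{\ell^2}=\mathbb{I}[\ell\neq 0]$ and the elementary $\max$/$\min$ rules for products of indicators, is exactly the intended verification. The only implicit point, which the paper shares, is that the rewriting $\mathbb{I}[x\le y-1]=\mathbb{I}[x<y]$ in the case $\ell\neq 0$ presumes integer arguments, i.e.\ the discrete setting of Assumption A.
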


We conclude with another result from 
\cite{ERS19vb1}; {this results motivates the covariance expansion in Theorem \ref{thm:var-}}.

\begin{lma}
  If $f \in \mathrm{dom}(\Delta^{-\ell})$ is such that
  $\Delta^{-\ell}f$ is integrable on $[x_1, x_2] \cap \mathcal{S}(p)$
  then,
\begin{equation} 
  \label{eq:28}
f(x_2)-f(x_1) =   \mathbb{E}\left[   \Phi^{\ell}_p(x_1, X, x_2) \Delta^{-\ell}f(X)\right].
\end{equation}
 If, furthermore,  $f \in
L^1(p)$ then
\begin{equation*}
  \mathbb{E}[(f(X_2) - f(X_1)) \mathbb{I}[X_1<X_2]] = \mathbb{E}\left[
    \Phi^{\ell}_p(X_1, X, X_2) \Delta^{-\ell}f(X)\right]. 
\end{equation*}
\end{lma}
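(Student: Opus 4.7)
The plan is to unfold the expectation on the right-hand side using the definition of $\Phi^{\ell}_p$, reducing it to either an integral (continuous case, $\ell=0$) or a sum (discrete case, $\ell=\pm 1$) of the increments $\Delta^{-\ell}f$ over $[x_1,x_2]\cap\mathcal{S}(p)$, and then recognising this as either a fundamental theorem of calculus or a telescoping sum. Concretely, substituting $\Phi^{\ell}_p(x_1,x,x_2) = \chi^{\ell}(x_1,x)\chi^{-\ell}(x,x_2)/p(x)$ into the expectation cancels the density against $\mu$, and one is left with
\[
\int \chi^{\ell}(x_1,x)\chi^{-\ell}(x,x_2)\, \Delta^{-\ell}f(x)\, d\mu(x).
\]

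For $\ell=0$ one has $a_0 = a_{-0} = 0$, so $\chi^{0}(x_1,x)\chi^{0}(x,x_2)=\mathbb{I}[x_1\le x \le x_2]$, and the integral reduces to $\int_{x_1}^{x_2}f'(x)\,dx = f(x_2)-f(x_1)$ by absolute continuity of $f$. For $\ell=1$ one gets $\chi^{1}(x_1,x)\chi^{-1}(x,x_2)=\mathbb{I}[x_1+1\le x \le x_2]$, so the sum equals $\sum_{x=x_1+1}^{x_2}(f(x)-f(x-1))$, which telescopes to $f(x_2)-f(x_1)$; the case $\ell=-1$ is symmetric, yielding $\sum_{x=x_1}^{x_2-1}(f(x+1)-f(x)) = f(x_2)-f(x_1)$. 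The integrability hypothesis on $\Delta^{-\ell}f$ over $[x_1,x_2]\cap\mathcal{S}(p)$ justifies the finiteness at each step.

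For the second identity, I would take $X$ independent of $(X_1,X_2)$, all with law $P^X$, and condition on $(X_1,X_2)$. On $\{X_1<X_2\}$ the integrability hypothesis for fixed endpoints holds almost surely (since $f\in L^1(p)$ implies $\Delta^{-\ell}f$ is integrable against $p$ on any such interval), so the first part of the lemma applies pointwise and gives
\[
(f(X_2)-f(X_1))\mathbb{I}[X_1<X_2] = \mathbb{I}[X_1<X_2]\, \mathbb{E}\!\left[\Phi^{\ell}_p(X_1,X,X_2)\Delta^{-\ell}f(X)\,\middle|\, X_1,X_2\right].
\]
On the complementary event $\{X_1\ge X_2\}$ the product $\chi^{\ell}(X_1,X)\chi^{-\ell}(X,X_2)$ inside $\Phi^{\ell}_p(X_1,X,X_2)$ vanishes identically in the two discrete cases, and vanishes $\mu$-a.e.\ (hence $P^X$-a.s., since $P^X\ll \mu$) in the continuous case; this is already noted in the text. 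Hence the outer indicator $\mathbb{I}[X_1<X_2]$ can be removed from the right-hand side, and an application of the tower property concludes. The main (and only real) obstacle is the book-keeping of the integrability conditions, in particular using $f\in L^1(p)$ together with Fubini to unconditionalise the expectation — every other step is case-by-case bookkeeping on $\ell\in\{-1,0,1\}$.
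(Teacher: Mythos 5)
Your proof is correct: unfolding $\Phi^{\ell}_p$ reduces the first identity to the fundamental theorem of calculus for $\ell=0$ and a telescoping sum for $\ell=\pm1$ (with the indicator ranges $\mathbb{I}[x_1+1\le x\le x_2]$ and $\mathbb{I}[x_1\le x\le x_2-1]$ computed correctly from $a_\ell$), and the second identity then follows by conditioning on $(X_1,X_2)$, noting that $\Phi^{\ell}_p(X_1,X,X_2)$ vanishes ($\mu$-a.e.) on $\{X_1\ge X_2\}$, and applying the tower property — which is the standard argument; the paper itself gives no proof here since the lemma is quoted from \cite{ERS19vb1}. The only small inaccuracy is the attribution of the almost-sure integrability of $\Delta^{-\ell}f$ on $[X_1,X_2]\cap\mathcal{S}(p)$ to $f\in L^1(p)$: it actually comes from $f\in\mathrm{dom}(\Delta^{-\ell})$ (absolute continuity in the continuous case, finiteness of the sum over a finite range in the discrete case), while $f\in L^1(p)$ serves to make the left-hand side finite and to justify the unconditioning step.
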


\subsection{A probabilistic Lagrange inequality}

{The first ingredient for our results is the following covariance representation} 
(recall that all proofs are
in the Appendix).

\begin{lma}\label{lma:var1}
  Let $X \sim p$ with support $\mathcal{S}(p)$.  If $X_1, X_2$ are independent copies of $X$ then
\begin{align} \label{eq:27}
\mathrm{Cov}[f(X), g(X)] & = \E\left[ \big(f(X_2)-f(X_1) \big)
  \big(g(X_2)-g(X_1) \big) \mathbb{I}{[X_1<X_2]} \right] \\
& = \frac{1}{2} \E\left[ \big(f(X_2)-f(X_1) \big)
  \big(g(X_2)-g(X_1) \big)\right]\label{eq:27bis}
\end{align}
for all  $f,g \in L^2(p)$.
\end{lma}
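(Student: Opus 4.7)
The plan is to establish both equalities directly from the definition of covariance, using the symmetry of the integrand in $(X_1,X_2)$. The hypothesis $f,g \in L^2(p)$ guarantees (via Cauchy–Schwarz and independence of $X_1,X_2$) that every expectation appearing in the argument is finite, so there are no integrability issues to worry about.

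First I would prove the second equality, $\eqref{eq:27} = \eqref{eq:27bis}$. The key observation is that the integrand
\[
H(X_1,X_2) := \bigl(f(X_2)-f(X_1)\bigr)\bigl(g(X_2)-g(X_1)\bigr)
\]
is symmetric under the swap $X_1 \leftrightrightarrow X_2$, since each factor picks up a minus sign. Decomposing the sample space according to $\{X_1<X_2\}$, $\{X_1>X_2\}$, $\{X_1=X_2\}$, the contribution of the diagonal $\{X_1=X_2\}$ is zero (because $H$ vanishes there), and by the symmetry of $H$ combined with the fact that $(X_1,X_2)\stackrel{\text{law}}{=}(X_2,X_1)$, the two off-diagonal contributions are equal. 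Hence $\E[H(X_1,X_2)] = 2\,\E[H(X_1,X_2)\mathbb{I}[X_1<X_2]]$, which is precisely the second equality.

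Next I would prove the first equality by expanding $H(X_1,X_2)$ and using independence. One has
\[
H(X_1,X_2) = f(X_2)g(X_2) + f(X_1)g(X_1) - f(X_1)g(X_2) - f(X_2)g(X_1).
\]
Taking expectations and using that $X_1$ and $X_2$ are independent copies of $X$, the first two terms each contribute $\E[f(X)g(X)]$ while the last two each contribute $\E[f(X)]\E[g(X)]$. Thus $\tfrac12 \E[H(X_1,X_2)] = \E[f(X)g(X)] - \E[f(X)]\E[g(X)] = \mathrm{Cov}[f(X),g(X)]$, which combined with the second equality gives the first.

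There is no real obstacle here; the only thing to be careful about is the handling of the diagonal $\{X_1=X_2\}$ in the discrete case (where it has positive probability). This is painless because $H$ vanishes identically on the diagonal, so whether or not the inequality in $\chi^\ell$-style indicators is strict plays no role. The whole argument is a probabilistic restatement of the classical Hoeffding/Lagrange covariance identity and fits naturally as the starting point for the iterated Cauchy–Schwarz/Lagrange machinery to be developed in Lemma \ref{lma:matrixcauchyschwarz} and Theorem \ref{thm:var-}.
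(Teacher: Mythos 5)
Your proof is correct and follows essentially the same route as the paper's: the equality between the two expressions is obtained from exchangeability of $(X_1,X_2)$ together with the vanishing of the integrand on the diagonal, and the identification with the covariance follows from expanding the product and using independence (the paper merely streamlines this last step by assuming $\E[f(X)]=\E[g(X)]=0$ without loss of generality). Your explicit remark that the diagonal term is harmless in the discrete case is a useful clarification but not a departure from the paper's argument.
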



A {simple} representation such as \eqref{eq:27} is {obviously}
not new, per se; see e.g.\ {the variance expression in} \cite[page
122]{miclo2008mathematiques}.  In fact, treating the discrete and
continuous cases separately, one could also obtain identity
\eqref{eq:27} as a direct application of Lagrange's identity (a.k.a.\
the Cauchy-Schwarz inequality with remainder) which reads, in the
finite discrete case, as
\begin{equation}
  \left(\sum_{k=u}^v a_k^2 \right)\left(\sum_{k=u}^v b_k^2 \right) -
  \left(\sum_{k=u}^v a_k b_k \right)^2 =  \sum_{i=u}^{v-1}\sum_{j=i+1}^v (a_ib_j-a_jb_i)^2. 
\label{eq:lagrange}
\end{equation}
Using $a_k=g(k)\sqrt{p(k)}$ and $b_k=\sqrt{p(k)}$ for $k=0,\ldots,n$,
identity \eqref{eq:27} follows in the finite case.  Identity
\eqref{eq:lagrange} and its continuous counterpart will play a crucial
role in the sequel. As it turns out, they are more suited to our
cause under the following form.


\begin{lma}[A probabilistic Lagrange identity]\label{lma:matrixcauchyschwarz}
  Fix some integer $r \in \N_0$ and introduce the (column) vector
  $\mathbf{v}(x) = (v_1(x), \cdots, v_r(x))' \in \R^r$. Also let
  $g: \R \to \R$ be any function such that $v_k g \in L^1(p)$ for all
  $k=1, \ldots, r$.Then
  \begin{align}
    & \mathbb{E} \left[ \mathbf{v}(X) g(X) \Phi_p^{\ell}(u,X, v)  \right]
    \mathbb{E} \left[ \mathbf{v}'(X) g(X) \Phi_p^{\ell}(u,X, v)  \right]
      \nonumber \\
&    \qquad \qquad   =     \mathbb{E} \left[ \mathbf{v}(X)\mathbf{v}'(X) 
      \Phi_p^{\ell}(u,X, v)  \right]  \mathbb{E} \left[ g^2(X) \Phi_p^{\ell}(u, X,
                     v) \right]- R^\ell(u, v; \mathbf{v}, g), \label{eq:75}                    
  \end{align}
  where
  {{$ R^\ell(u, v; \mathbf{v}, g)  $ is the $r \times r$ matrix given by}}  
     \begin{equation}
    \label{eq:73}
    R^\ell(u, v; \mathbf{v}, g)  
     = \mathbb{E} \left[ (\mathbf{v}_3 g_4 - \mathbf{v}_4 g_3)  (\mathbf{v}_3 g_4 - \mathbf{v}_4 g_3)' 
      \Phi_p^{\ell}(u, X_3, X_4, v)
    \right]
  \end{equation}
  with
\begin{align}
  \label{eq:51}
  \Phi^\ell_p(u, x_1, x_2, v)  
 = \frac{\chi^{\ell}(u, x_1)\chi^{\ell^2}(x_1, x_2)\chi^{-\ell}(x_2, v)}{p(x_1)p(x_2)}. 
  \end{align}
  Here  {{$X_3, X_4$ denote two independent copies of $X$ and $\mathbf{v}_j = \mathbf{v}(X_j)$ so that $ v_{ij} = v_i(X_j)$,  and $g_j = g(X_j)$, $i=3, 4$.}} 
  When the context is clear, we abbreviate
  $R^\ell(u, v; \mathbf{v}, g) = R(u, v)$.
\end{lma}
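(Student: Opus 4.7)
The plan is to treat this as a matrix-valued probabilistic analogue of the classical Lagrange identity \eqref{eq:lagrange}. The key algebraic move is to express both products of expectations on the two sides of \eqref{eq:75} as double expectations over two independent copies $X_3, X_4$ of $X$, form the signed difference
\[
A := \mathbb{E}[\mathbf{v}\mathbf{v}'\Phi_p^\ell(u,\cdot,v)]\mathbb{E}[g^2\Phi_p^\ell(u,\cdot,v)] - \mathbb{E}[\mathbf{v}g\Phi_p^\ell(u,\cdot,v)]\mathbb{E}[\mathbf{v}'g\Phi_p^\ell(u,\cdot,v)],
\]
and then symmetrize under $X_3 \leftrightarrow X_4$ so that the four mixed terms assemble into the outer product $(\mathbf{v}_3 g_4 - \mathbf{v}_4 g_3)(\mathbf{v}_3 g_4 - \mathbf{v}_4 g_3)'$ appearing in \eqref{eq:73}.

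Concretely, independence of $X_3$ and $X_4$ turns $A$ into a single expectation against the symmetric kernel $K_{34}/(p(X_3)p(X_4))$, where $K_{34} := \chi^\ell(u,X_3)\chi^{-\ell}(X_3,v)\chi^\ell(u,X_4)\chi^{-\ell}(X_4,v)$. Averaging the scalar integrand $\mathbf{v}_3\mathbf{v}_3'g_4^2 - \mathbf{v}_3\mathbf{v}_4' g_3 g_4$ with its image under the swap of the two copies yields
\[
2A = \mathbb{E}\!\left[(\mathbf{v}_3 g_4 - \mathbf{v}_4 g_3)(\mathbf{v}_3 g_4 - \mathbf{v}_4 g_3)'\,K_{34}/\bigl(p(X_3)p(X_4)\bigr)\right].
\]
The next step is to recast $K_{34}$ in the form prescribed by \eqref{eq:51}. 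Lemma \ref{lem:chirules} gives $\chi^{\ell^2}(X_3,X_4) + \chi^{\ell^2}(X_4,X_3) = 1$ up to a diagonal correction supported on $\{X_3 = X_4\}$; that correction is harmless, since in the continuous case $\{X_3 = X_4\}$ has zero measure, and in the discrete case the outer product $(\mathbf{v}_3 g_4 - \mathbf{v}_4 g_3)(\mathbf{v}_3 g_4 - \mathbf{v}_4 g_3)'$ vanishes there. Inserting this partition of unity under the expectation splits $2A$ into two pieces indexed by the ordering of $X_3$ and $X_4$.

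On $\{\chi^{\ell^2}(X_3,X_4)=1\}$ a short case analysis for $\ell\in\{-1,0,1\}$, using the shifts $a_\ell,b_\ell$ from \eqref{eq:6}, shows that $\chi^\ell(u,X_3)$ and $\chi^{-\ell}(X_4,v)$ already force $\chi^{-\ell}(X_3,v)=\chi^\ell(u,X_4)=1$, so that $K_{34}\chi^{\ell^2}(X_3,X_4) = \chi^\ell(u,X_3)\chi^{\ell^2}(X_3,X_4)\chi^{-\ell}(X_4,v)$, which by \eqref{eq:51} is exactly $p(X_3)p(X_4)\Phi_p^\ell(u,X_3,X_4,v)$; the symmetric identity on $\{\chi^{\ell^2}(X_4,X_3)=1\}$ is obtained by swapping $X_3$ and $X_4$. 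A final relabeling in one of the two summands, which leaves the outer product invariant, collapses both contributions to the same expression and produces $A = R^\ell(u,v;\mathbf{v},g)$. The main obstacle will be precisely this collapse of the four $\chi$-indicators in $K_{34}$ down to the two extremal ones in \eqref{eq:51}, because the shifts $a_\ell,b_\ell$ act differently for $\ell=0$ and $\ell=\pm1$; however, Lemma \ref{lem:chirules} together with the nesting identities \eqref{eq:76} keep the bookkeeping routine.
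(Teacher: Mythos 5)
Your proposal is correct and follows essentially the same route as the paper's proof: the factorization of the two-variable kernel $\Phi_p^{\ell}(u,x_1,x_2,v)$ into the product of one-variable kernels times $\chi^{\ell^2}$ via Lemma \ref{lem:chirules}, symmetrization under swapping the two independent copies (with the diagonal handled by the vanishing of the outer product), and independence to pass between products of expectations and a single double expectation. The only difference is that you start from the difference of products and arrive at $R^{\ell}(u,v;\mathbf{v},g)$, whereas the paper runs the same computation starting from $R^{\ell}(u,v;\mathbf{v},g)$; this is the identical argument read in the opposite direction.
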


\subsection{Papathanasiou-type expansion}
\label{sec:main-result}

Now the necessary ingredients are available to give the main result of this paper. We use the notation that for a vector $\mathbf{v}= (v_1, \ldots, v_r)' $ of functions, the operator $\Delta^\ell$ operates on each component, so that $\Delta^\ell \mathbf{v} =  (\Delta^\ell v_1, \ldots, \Delta^\ell v_r)' $. 

\begin{thm}\label{thm:var-}
  Fix $\ell \in \left\{ -1, 0, 1 \right\}$ and let
  $\pmb{\ell}=(\ell_n)_{n\ge1}$ be a sequence such that $\ell_n=0$ for all $n$ if $\ell =0$,
  otherwise $\ell_n\in \left\{ -1, 1 \right\}$ arbitrarily
  chosen. Let $(h_n)_{n\ge1}$ be a sequence of real valued functions
  $h_i: \R \to \R$ such that
  $\mathbb{P}[{\Delta^{-\ell_i} h_{i}}(X)>0] = 1$ for all $i\ge1$.
  Starting with some function $\mathbf{g}: \R \to \R^r$, we  recursively define the
  sequence $(\mathbf{g}_k)_{k\ge 0}$ by $\mathbf{g}_0(x)=\mathbf{g}(x)$ and
  $\mathbf{g}_{i}(x)={\Delta^{-\ell_i}\mathbf{g}_{i-1}(x)}/{\Delta^{-\ell_i}h_{i}(x)}$ for all
  $x \in \mathcal{S}(p)$. 
For any sequence $(x_j)_{j\ge1}$ we let {$\Phi_{0}^{\pmb{\ell}} (x_1, x_2)=1$ and}
    \begin{align}
   \lefteqn{ \Phi_{n}^{\pmb{\ell}}(x_1, x_3, \ldots, x_{2n-1}, x_{2n+1},
   x_{2n+2}, x_{2n}, \ldots, x_2) }\nonumber \\
&  \qquad = \frac{1}{\prod_{i=3}^{2n+2} p(x_i)} \chi^{\ell^2} (x_{2n+1}, x_{2n+2}) 
\prod_{i=1}^{n} \chi^{\ell_i} (x_{2i-1}, x_{2i+1}) \chi^{-\ell_i} (x_{2i +2}, x_{2i}).  \label{eq:phiex} 
  \end{align}
 Then, for all vectors of functions $\mathbf{f}: \mathbb{R}\rightarrow\mathbb{R}^r$ such that the expectations below exist, and all
$n \ge 1$, we have
\begin{align}\label{eq:varianceexpansion}
  &  \mathrm{Cov} \left[
\mathbf{f}(X)  \right]
          = 
 \sum_{k=1}^n (-1)^{k-1} \E\left[ 
                      \Delta^{-\ell_k} \mathbf{f}_{k-1}(X)
                                    \Delta^{-\ell_k} \mathbf{f}_{k-1}'(X) 
 \frac{\Gamma_k^{\pmb
                    \ell}\mathbf{h}(X) }{\Delta^{-\ell_{k}}
                        h_k(X)} 
                               \right] 
   + (-1)^n R_n^{\pmb\ell}(\mathbf{h})
\end{align}
where the derivatives are taken component-wise, and the weight
sequences are
\begin{align}
  &  \Gamma_k^{\pmb \ell} \mathbf{h}(x)  = \E\Bigg[ 
 (h_k(X_{2k})-h_k(X_{2k-1}))  
  \Phi^{\ell_{k}}_p(x_{2k-1}, x, x_{2k})   \Phi_{k-1}^{\pmb{\ell}}(X_1, \ldots,  X_{2k-1},  X_{2k}, \ldots, X_2) \nonumber \\
  &\quad \quad \quad  \quad \quad \quad  \prod_{i=1}^{k-1} 
\Delta^{-\ell_i} h_i(X_{2i+1}, X_{2i+2})   
    \Bigg]\label{eq:2}
\end{align}
 and 
\begin{align}
  R_n^{\pmb \ell}(\mathbf{h})& =  \E \Bigg[
 \left(\mathbf{f}_n(X_{2n+2})- \mathbf{f}_n(X_{2n+1})\right)
                               \left(\mathbf{f}_n(X_{2n+2})-
                               \mathbf{f}_n(X_{2n+1})\right)'
                               \nonumber \\
  & \qquad  \qquad     
\Phi_{n}^{\pmb{\ell}}(X_1, \ldots X_{2n+1},  X_{2n+2}, \ldots,  X_2) \prod_{i=1}^n   
\Delta^{-\ell_{i}} h_i{(X_{2i+1}, X_{2i+2})}   
         \Bigg] \label{eq:89}
\end{align}
where $\Delta^{\ell}h_k(x, y) = \Delta^{\ell}h_k(x)
\Delta^{\ell}h_k(y)$ and an empty product is set to 1.
\end{thm}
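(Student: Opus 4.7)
The plan is to prove Theorem \ref{thm:var-} by induction on $n$, with the same ``two-step move'' applied at every step: first, use \eqref{eq:28} to rewrite a difference $\mathbf{f}_{k-1}(\cdot) - \mathbf{f}_{k-1}(\cdot)$ as a conditional expectation of its $\Delta^{-\ell_k}$-derivative; second, apply the probabilistic Lagrange identity of Lemma \ref{lma:matrixcauchyschwarz} to split the resulting product of conditional expectations into a leading term (which will be recognised as the $k$-th summand $T_k$ in \eqref{eq:varianceexpansion}, stripped of its $(-1)^{k-1}$ sign) and a matrix remainder (contributing to the new $R_k^{\pmb\ell}(\mathbf{h})$). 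The identity $R_{k-1}^{\pmb\ell}(\mathbf{h}) = T_k - R_k^{\pmb\ell}(\mathbf{h})$ at each step, iterated, then telescopes into the claimed expansion.

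For $n=1$ the starting point is the covariance representation from Lemma \ref{lma:var1}, $\mathrm{Cov}[\mathbf{f}(X)] = \mathbb{E}[(\mathbf{f}(X_2) - \mathbf{f}(X_1))(\mathbf{f}(X_2) - \mathbf{f}(X_1))'\mathbb{I}[X_1 < X_2]]$. Applying \eqref{eq:28} to each factor separately with independent copies $X_3, X_4$ produces $\mathbb{E}[\Delta^{-\ell_1}\mathbf{f}(X_3)\Delta^{-\ell_1}\mathbf{f}'(X_4)\Phi^{\ell_1}_p(X_1, X_3, X_2)\Phi^{\ell_1}_p(X_1, X_4, X_2)]$. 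The key idea is then the symmetric splitting $\Delta^{-\ell_1}\mathbf{f}(x) = \bigl(\mathbf{f}_1(x)\sqrt{\Delta^{-\ell_1}h_1(x)}\bigr)\cdot\sqrt{\Delta^{-\ell_1}h_1(x)}$, legitimate by the hypothesis $\Delta^{-\ell_1}h_1>0$ almost surely, which allows invoking Lemma \ref{lma:matrixcauchyschwarz} conditionally on $(X_1, X_2)$ with $u=X_1$, $v=X_2$, $\mathbf{v} = \mathbf{f}_1\sqrt{\Delta^{-\ell_1}h_1}$, and $g = \sqrt{\Delta^{-\ell_1}h_1}$. The scalar factor $\mathbb{E}[g^2(X)\Phi^{\ell_1}_p(X_1, X, X_2)\mid X_1, X_2]$ produced in the leading Lagrange term then collapses, by a second application of \eqref{eq:28} with $f=h_1$, exactly to $h_1(X_2) - h_1(X_1)$. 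Re-integrating over $X_1, X_2$ recognises the formula \eqref{eq:2} for $\Gamma_1^{\pmb\ell}\mathbf{h}$; expanding the Lagrange remainder \eqref{eq:73} with these $\mathbf{v}, g$ produces the outer product $(\mathbf{f}_1(X_3) - \mathbf{f}_1(X_4))(\mathbf{f}_1(X_3) - \mathbf{f}_1(X_4))'\Delta^{-\ell_1}h_1(X_3)\Delta^{-\ell_1}h_1(X_4)$, which coincides with $R_1^{\pmb\ell}(\mathbf{h})$ in \eqref{eq:89}.

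The inductive step is identical in spirit. The remainder $R_{n-1}^{\pmb\ell}(\mathbf{h})$ contains a rank-one factor $(\mathbf{f}_{n-1}(X_{2n}) - \mathbf{f}_{n-1}(X_{2n-1}))(\mathbf{f}_{n-1}(X_{2n}) - \mathbf{f}_{n-1}(X_{2n-1}))'$, to which I apply the same two-step move with $\ell$ replaced by $\ell_n$, introducing independent copies $X_{2n+1}, X_{2n+2}$ and choosing $\mathbf{v} = \mathbf{f}_n\sqrt{\Delta^{-\ell_n}h_n}$, $g = \sqrt{\Delta^{-\ell_n}h_n}$. The Lagrange leading term again yields the scalar $h_n(X_{2n}) - h_n(X_{2n-1})$; combined with the accumulated factor $\Phi_{n-1}^{\pmb\ell}\prod_{i=1}^{n-1}\Delta^{-\ell_i}h_i(X_{2i+1}, X_{2i+2})$ and with the rewriting $\mathbf{f}_n\mathbf{f}_n'\Delta^{-\ell_n}h_n = \Delta^{-\ell_n}\mathbf{f}_{n-1}\Delta^{-\ell_n}\mathbf{f}'_{n-1}/\Delta^{-\ell_n}h_n$, this reconstitutes $T_n$ with $\Gamma_n^{\pmb\ell}\mathbf{h}$ as in \eqref{eq:2}, while the Lagrange remainder reconstitutes $R_n^{\pmb\ell}(\mathbf{h})$.

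The main obstacle will be the bookkeeping in the passage from $R_{n-1}^{\pmb\ell}(\mathbf{h})$ to $R_n^{\pmb\ell}(\mathbf{h})$, specifically verifying that the product $\Phi^{\ell_n}_p(X_{2n-1}, X_{2n+1}, X_{2n+2}, X_{2n})\cdot\Phi_{n-1}^{\pmb\ell}$ coincides with $\Phi_n^{\pmb\ell}$ of \eqref{eq:phiex}. The two expressions agree except for the indicator $\chi^{\ell^2}(X_{2n-1}, X_{2n})$ inherited from $\Phi_{n-1}^{\pmb\ell}$; this must be shown redundant on the support of the newly acquired chain $\chi^{\ell_n}(X_{2n-1}, X_{2n+1})\chi^{\ell^2}(X_{2n+1}, X_{2n+2})\chi^{-\ell_n}(X_{2n+2}, X_{2n}) = 1$, which is a short case check on $\ell \in \{-1, 0, 1\}$ using the product rules of Lemma \ref{lem:chirules}. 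Non-negativity of $R_n^{\pmb\ell}(\mathbf{h})$ as a symmetric positive semi-definite matrix is then immediate, since its integrand is the rank-one matrix $(\mathbf{f}_n(X_{2n+2}) - \mathbf{f}_n(X_{2n+1}))(\mathbf{f}_n(X_{2n+2}) - \mathbf{f}_n(X_{2n+1}))'$ weighted by the non-negative scalar $\Phi_n^{\pmb\ell}\prod_{i=1}^n\Delta^{-\ell_i}h_i(X_{2i+1}, X_{2i+2})$.
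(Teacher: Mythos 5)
Your proposal is correct and follows essentially the same route as the paper's own proof: induction on $n$, the representation \eqref{eq:28} applied conditionally with fresh independent copies, the Lagrange identity of Lemma \ref{lma:matrixcauchyschwarz} with $\mathbf{v}=\Delta^{-\ell_k}\mathbf{f}_{k-1}/\sqrt{\Delta^{-\ell_k}h_k}$ and $g=\sqrt{\Delta^{-\ell_k}h_k}$, and the telescoping identity $R_{k-1}^{\pmb\ell}(\mathbf{h})=T_k-R_k^{\pmb\ell}(\mathbf{h})$ (the paper's \eqref{eq:88}). The bookkeeping point you flag, that $\Phi^{\ell_n}_p(x_{2n-1},x_{2n+1},x_{2n+2},x_{2n})\,\Phi_{n-1}^{\pmb\ell}$ equals $\Phi_n^{\pmb\ell}$ because the inherited indicator $\chi^{\ell^2}(x_{2n-1},x_{2n})$ is redundant on the support of the new chain, is exactly the recursion \eqref{eq:38} that the paper disposes of by ``direct verification,'' and your case check handles it correctly.
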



\begin{rmk}\label{cor:infvarexp}
  If $R_n^{\pmb \ell}(\mathbf{h}) \to 0$ as $n\to \infty$ then, under
  the conditions of Theorem \ref{thm:var-},
  \begin{align}\label{eq:infvarianceexpansion}
  &  \mathrm{Cov} \left[
\mathbf{f}(X)  \right]
          = 
 \sum_{k=1}^\infty (-1)^{k-1} \E\left[ 
                      \Delta^{-\ell_k} \mathbf{f}_{k-1}(X)
                                    \Delta^{-\ell_k} \mathbf{f}_{k-1}'(X) 
 \frac{\Gamma_k^{\pmb
                    \ell}\mathbf{h}(X) }{\Delta^{-\ell_{k}}
                        h_k(X)} 
                               \right].
\end{align}
In particular when $\mathbf{f}$ is a $d$th-degree polynomial, then
{$R_{n}^{\pmb \ell} (\mathbf{h})$ vanishes for $n \ge d$} and
\eqref{eq:varianceexpansion} is an exact expansion of the variance in
\eqref{eq:varianceexpansion} with respect to the
$\Gamma_k^{\pmb \ell} \mathbf{h}(x)$ functions ($k=1,\ldots,d$).
\end{rmk}

{
\begin{rmk}\label{rmk:incre}
  A stronger sufficient condition on the functions $h_i$ is that they
  be strictly increasing throughout $\mathcal{S}(p)$, in which case
  the condition $\Delta^{-\ell_i}h_{i}>0$ is guaranteed.  Under this
  assumption, the matrix $R_n^{\pmb\ell}(\mathbf{h})$ defined in
  \eqref{eq:89} is non-negative definite so that, in particular,
  taking $h_i = h$ for all $i \ge 1$ and fixing $r=2$ we recover the
  expansion \eqref{eq:3} as stated in the Introduction.
  \end{rmk}
}

\begin{rmk}
  When $\ell\neq 0$ then the condition that
  $\mathbb{P}[\Delta^{-\ell_i}h_i(X)>0]=1$ is itself also too
  restrictive because, as will have been made clear in the proof (see
  the Appendix), the recurrence only implies that
  $\Delta^{-\ell_i}h_i(x)$ needs to be positive on some interval
  $[a+\mathbf{a}_i; b-\mathbf{b}_i] \subset [a, b]$ where
  $\textbf{a}_i$ and $\textbf{b}_i$ are positive integers (they will
  be properly defined in \eqref{eq:56}). In particular when $\ell\neq 0$ the
sequence necessarily stops if $\mathcal{S}(p)$ is bounded, since after
a certain number of iterations the indicator functions defining
$\Phi_{n, j}^{\pmb\ell}$ will be 0 everywhere. 
\end{rmk}

Suppose that the assumption of Remark \ref{rmk:incre} applies, so that
the remainder is non negative definite. Then, taking $n=1$ in
\eqref{eq:varianceexpansion} gives an upper bound, and taking $n=2$
gives a lower bound, on the covariance, and the following holds
(stated again in the case $r=2$, for the sake of clarity).
  \begin{cor}
    Let all the conditions in Theorem \ref{thm:var-} prevail for
    ${{n}}=2$. Then
    \begin{align*}
 \E\left[             \Delta^{-\ell_1} f(X)
                                    \Delta^{-\ell_1} g(X) 
 \frac{\Gamma_1^{\ell_1}{h_1}(X) }{\Delta^{-\ell_1}h_1(X)} 
      \right] -
             \E\left[  \Delta^{-\ell_2} \bigg(
      \frac{\Delta^{-\ell_1}f(X)}{\Delta^{-\ell_1}h(X)}\bigg)
\Delta^{-\ell_2} \bigg(  \frac{\Delta^{-\ell_1}f(X)}{\Delta^{-\ell_1}h(X)}\bigg) 
 \frac{\Gamma_2^{\ell_1, \ell_2}(h_1, h_2)(X) }{\Delta^{-\ell_{2}}
                        h_2(X)}
                               \right] 
      \\
      \le \mathrm{Cov}[ f(X), g(X)] \le 
      \E\left[ 
                      \Delta^{-\ell_1} f(X)
                                    \Delta^{-\ell_1} g(X) 
 \frac{\Gamma_1^{\ell_1}{h_1}(X) }{\Delta^{-\ell_{1}}
                        h_1(X)} 
      \right].
    \end{align*}
  \end{cor}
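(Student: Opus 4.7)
The plan is to deduce the corollary from two direct applications of Theorem \ref{thm:var-}, once at $n=1$ and once at $n=2$, combined with the non-negativity of the remainder provided by Remark \ref{rmk:incre}.

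First I would specialise Theorem \ref{thm:var-} to the vector-valued function $\mathbf{f}=(f,g)'$ (that is, $r=2$) and read off the scalar component pertaining to $\mathrm{Cov}[f(X),g(X)]$, which is precisely the identity \eqref{eq:3} stated in the Introduction. Substituting $n=1$ gives at once
\begin{align*}
\mathrm{Cov}[f(X),g(X)] = \E\!\left[\Delta^{-\ell_{1}}f(X)\,\Delta^{-\ell_{1}}g(X)\,\frac{\Gamma_{1}^{\ell_{1}}h_{1}(X)}{\Delta^{-\ell_{1}}h_{1}(X)}\right] - R_{1}^{\pmb\ell}(\mathbf{h}),
\end{align*}
and substituting $n=2$, using that by the recursion defining the iterated derivatives $\mathbf{f}_{1}=\Delta^{-\ell_{1}}\mathbf{f}/\Delta^{-\ell_{1}}h_{1}$, produces the same first term minus the displayed second term plus $R_{2}^{\pmb\ell}(\mathbf{h})$.

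The next step is to argue that both remainders are non-negative. For this I would invoke Remark \ref{rmk:incre}: under the strict-monotonicity hypothesis on each $h_{i}$, every factor $\Delta^{-\ell_{i}}h_{i}(X_{2i+1},X_{2i+2})$ appearing in \eqref{eq:89} is strictly positive, the kernel $\Phi_{n}^{\pmb\ell}$ is non-negative, and the integrand carries a pointwise rank-one positive-semidefinite outer product $(\mathbf{f}_{n}(X_{2n+2})-\mathbf{f}_{n}(X_{2n+1}))(\mathbf{f}_{n}(X_{2n+2})-\mathbf{f}_{n}(X_{2n+1}))'$. Integrating a non-negative weight against a pointwise positive-semidefinite matrix yields a non-negative-definite matrix $R_{n}^{\pmb\ell}(\mathbf{h})$, whose relevant scalar component (the one appearing in the scalar bilinear form \eqref{eq:3}) is therefore non-negative. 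Plugging $R_{1}^{\pmb\ell}(\mathbf{h})\geq 0$ into the $n=1$ identity immediately gives the claimed upper bound, and plugging $R_{2}^{\pmb\ell}(\mathbf{h})\geq 0$ into the $n=2$ identity gives the claimed lower bound.

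No real difficulty is anticipated. The only point demanding a line of care is the transfer of non-negativity from the \emph{matrix} $R_{n}^{\pmb\ell}(\mathbf{h})$ to the \emph{scalar} remainder that governs $\mathrm{Cov}[f(X),g(X)]$; this is a routine matter of reading the correct entry of the matrix identity \eqref{eq:varianceexpansion}, or, equivalently, of noting that in the scalar bilinear formulation \eqref{eq:3} the remainder is, by construction, the integral of a non-negative weight against a product of the form $\Delta^{-\ell}\!f_n(X_{2n+2}) - \Delta^{-\ell}\!f_n(X_{2n+1})$ paired with the analogous expression for $g$, whose positivity is inherited from the monotonicity structure. Everything else is direct substitution into the formulas of Theorem \ref{thm:var-}.
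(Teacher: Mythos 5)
Your argument is exactly the paper's: the corollary carries no separate proof in the appendix and is justified by the sentence immediately preceding it, namely applying Theorem \ref{thm:var-} (equivalently the scalar form \eqref{eq:3}) with $n=1$ and with $n=2$ and invoking the non-negativity of the remainders supplied by Remark \ref{rmk:incre}. The only point you assert rather than prove is the passage from the non-negative-definite remainder \emph{matrix} to the non-negativity of the scalar off-diagonal remainder governing $\mathrm{Cov}[f(X),g(X)]$ (the off-diagonal entry of a positive semidefinite matrix need not be non-negative), but the paper handles this step with the same brevity, so your proposal matches its level of rigour and its route.
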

  \begin{rmk}
    When $f=g$, the upper bound for $n=1$ is a weighted Poincar\'e
    inequality of the same essence as the upper bound provided in
    \cite{K85} (as revisited in \cite{ERS19vb1}), whereas the lower
    bound obtained with $n=2$ is of a different flavour.
  \end{rmk}

Of course such identities and expansions are only useful if the
weights are of a manageable form. This is exactly the topic of the
next section.

\section{About  the  weights  in Theorem \ref{thm:var-} }
\label{sec:weights-expansions}


The crucial quantities in Theorem \ref{thm:var-} are the sequences of
weights $\Gamma_k^{\pmb \ell} \mathbf{h}$ defined in \eqref{eq:2}.
For $k=1$,  the expression are  straightforward to obtain (see
equations \eqref{eq:9} for the continuous case $\ell_1 = 0$ and
\eqref{eq:10} for the discrete case $\ell_1 \in \left\{ -1, 1
\right\}$). 
 For larger $k$ the situation is not so straightforward. 
Relevance  
of the higher order terms in the covariance expansions
\eqref{eq:varianceexpansion} then hinges on the tractability of these
weights, which itself depends on the choice of functions
$h_1, h_2, \ldots$. In this section we restrict {attention} to the
(natural) choice $h_k(x) = h(x)$ for all $k$.  Then, writing
$ \Gamma_{k}^{\pmb \ell}h(x)$ instead of
$ \Gamma_{k}^{\pmb \ell}(h, h, \ldots)(x)$ we can express the sequence
of weights as
$\Gamma_{k}^{\pmb \ell}h(x) =: \mathbb{E} \left[ \gamma_k^{\pmb \ell}
  h(X_1, x, X_2) \right]$ where, for all $k \ge 1$, we set

\begin{align} \nonumber 
 \gamma_k^{\pmb \ell} h(x_1, x, x_2) 
 & =   \mathbb{E} \Big[  (h(X_{2k})-h(X_{2k-1}))
{\Phi^{\ell_{k}}_p(X_{2k-1}, x, X_{2k})   \Phi_{k-1}^{\pmb{\ell}}(x_1, X_3 \ldots,  X_{2k-1},  X_{2k}, \ldots, x_2)} 
    \\
    & \qquad \qquad  
    \prod_{i=1}^{k-1}\Delta^{-\ell_i} h(X_{2i+1}, X_{2i+2})
                                            \Big]. \label{eq:gammak} 
  \end{align}
  We now study  \eqref{eq:gammak} and the resulting expressions for
  the weights  under different sets of assumptions.

    \subsection{General considerations}
  \label{sec:gener-cons-cont}

  When no specific assumptions are made on $p$ or  $h$, we find it
  easier to separate the continuous case (i.e.\ $\ell = 0$) from the
  discrete one (i.e.\ $\ell \in\{-1,1\}$). 

  \subsubsection{The continuous case} 
  The continuous case is quite easy {as \eqref{eq:phiex} simplifies
    when all the test functions $h_i$ are equal and} the expressions
  follow directly from the structure of the weight sequence, which
  turn out to be straightforward iterated integrals. We note that such
  iterated integrals have a structure which may be of independent
  interest; all details are provided in the Appendix.

 \begin{lma}\label{lma:gamma2cont}
   Fix $\pmb\ell = (0, 0, \ldots)$ and let $h$ be non-decreasing. Then for all $k \ge 1$, 
   \begin{align}\label{eq:57cont}
     \gamma_k^0 h(x_1, x, x_2) & = (h(x)-h(x_1))^{k-1}(h(x_2)-h(x))^{k-1}(h(x_2)-h(x_1))
\frac{\mathbb{I}{[x_1\le x \le x_2]}}{p(x)k!(k-1)!}
   \end{align}
and 
\begin{equation}
  \label{eq:7}
  \Gamma_k^0h(x) = \frac{1}{k!(k-1)!} \frac{1}{p(x)}\mathbb{E}\left[ \big(h(x) - h(X_1)^{k-1} (h(X_2) -
  h(x)\big)^{k-1}\big(h(X_2) - h(X_1)\big) \mathbb{I}[X_1 \le x \le
  X_2] \right]. 
\end{equation}
   %
\end{lma}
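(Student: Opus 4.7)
The plan is to evaluate $\gamma_k^0 h(x_1,x,x_2)$ by reducing the expectation in \eqref{eq:gammak} to an iterated Lebesgue integral with simplex constraints, and then to compute that integral by a change of variable $u_j=h(y_j)$ followed by standard simplex volume formulas. The expression for $\Gamma_k^0 h(x)$ will then follow at once by taking expectation over $X_1,X_2$.

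First I would specialize the ingredients to $\ell=0$. Using $\chi^0(u,v)=\mathbb{I}[u\le v]$, $\Delta^0 h = h'$, and $\Phi_p^0(u,x,v)=\mathbb{I}[u\le x\le v]/p(x)$, the formula \eqref{eq:phiex} for $\Phi_{k-1}^{0}$ and the factor $\Phi_p^{0}(X_{2k-1},x,X_{2k})$ combine into a single chain indicator
\begin{equation*}
\mathbb{I}\bigl[x_1\le X_3\le X_5\le\cdots\le X_{2k-1}\le x\le X_{2k}\le X_{2k-2}\le\cdots\le X_4\le x_2\bigr],
\end{equation*}
divided by $p(x)\prod_{i=3}^{2k}p(X_i)$. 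The densities of the $X_i$ ($i=3,\ldots,2k$) cancel exactly the $p(X_i)$ in the denominator, so the expectation becomes an iterated Lebesgue integral, with the product $\prod_{i=1}^{k-1}h'(X_{2i+1})h'(X_{2i+2})$ providing one $h'$ factor for each of the $2(k-1)$ inner variables.

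Next I would perform the change of variable $u_j=h(y_j)$, which turns each $h'(y_j)\,dy_j$ into $du_j$ (using that $h$ is non-decreasing, so the ordering constraints are preserved). Setting $a=h(x_1)$, $b=h(x)$, $c=h(x_2)$, the integral becomes
\begin{equation*}
\frac{\mathbb{I}[x_1\le x\le x_2]}{p(x)}\int\!\!\cdots\!\!\int\!(u_{2k}-u_{2k-1})\,du_3\cdots du_{2k}
\end{equation*}
over the region $a\le u_3\le u_5\le\cdots\le u_{2k-1}\le b$ and $b\le u_{2k}\le u_{2k-2}\le\cdots\le u_4\le c$. The integrals over $u_3,u_5,\ldots,u_{2k-3}$ and over $u_4,u_6,\ldots,u_{2k-2}$ are pure simplex volumes, producing $(u_{2k-1}-a)^{k-2}/(k-2)!$ and $(c-u_{2k})^{k-2}/(k-2)!$ respectively.

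The hard (but routine) bookkeeping is the remaining two-dimensional integral
\begin{equation*}
J=\int_a^b\!\!\int_b^c (w-v)\,\frac{(v-a)^{k-2}}{(k-2)!}\,\frac{(c-w)^{k-2}}{(k-2)!}\,dv\,dw.
\end{equation*}
I would split $w-v=(w-b)+(b-v)$ so that the two pieces factor. Elementary beta-type integrals give $\int_b^c(w-b)(c-w)^{k-2}dw=(c-b)^k/(k-1)$ and $\int_a^b(v-a)^{k-2}dv=(b-a)^{k-1}/(k-1)$, and symmetric expressions for the other half. After collecting constants, the result telescopes to $J=(b-a)^{k-1}(c-b)^{k-1}(c-a)/(k!(k-1)!)$. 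Substituting back $a=h(x_1),b=h(x),c=h(x_2)$ gives \eqref{eq:57cont}. Finally, \eqref{eq:7} follows by taking the expectation $\Gamma_k^0 h(x)=\mathbb{E}[\gamma_k^0 h(X_1,x,X_2)]$, since the factor $1/(p(x)k!(k-1)!)$ is deterministic. The main obstacle is simply keeping the indexing of the nested simplex integrals straight; no new estimates are needed.
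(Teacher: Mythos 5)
Your proposal is correct and follows essentially the same path as the paper: reduce $\gamma_k^0 h$ to the nested integral over the chain $x_1\le X_3\le X_5\le\cdots\le X_{2k-1}\le x\le X_{2k}\le\cdots\le X_4\le x_2$ (densities cancelling) and substitute $u_j=h(y_j)$, which is legitimate since $h$ is non-decreasing. The only divergence is the final evaluation: the paper writes the resulting integral recursively as $\iota_k(u_1,u,u_2)=\int_{u_1}^{u}\int_{u}^{u_2}\iota_{k-1}(u_3,u,u_4)\,du_4\,du_3$ and proves the closed form by induction on $k$, whereas you integrate out the inner ordered variables as simplex volumes and finish with a single two-dimensional Beta-type integral $J$; both yield the same result, and taking the expectation over $X_1,X_2$ to get \eqref{eq:7} is immediate either way. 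One small arithmetic slip in your bookkeeping: $\int_b^c(w-b)(c-w)^{k-2}\,dw=(c-b)^k/(k(k-1))$, not $(c-b)^k/(k-1)$, and likewise $\int_a^b(b-v)(v-a)^{k-2}\,dv=(b-a)^k/(k(k-1))$; with these corrected constants the two halves combine, using $((k-2)!)^2\,k(k-1)^2=k!\,(k-1)!$, to give exactly $J=(b-a)^{k-1}(c-b)^{k-1}(c-a)/(k!(k-1)!)$ as you claim, so \eqref{eq:57cont} and \eqref{eq:7} follow.
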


Specific instantiations for different explicit distributions are given
in Section \ref{sec:illustrations-1}. We nevertheless note that,
letting $\nu(h)$ denote the mean $\mathbb{E}[h(X)]$ we get
\begin{align}\label{eq:9}
  \Gamma_1^0h(x) = \frac{1}{p(x)} \mathbb{E} \left[ (h(X_2) - h(X_1))
  \mathbb{I}[X_1\le x \le X_2] \right] = \frac{1}{p(x)} \mathbb{E}[
 {(\nu(h)-h(X))}
   \mathbb{I}[x \le X]] 
\end{align}
which one may recognize as the inverse of the canonical Stein operator
(see \eqref{eq:stcaninv}); in particular taking
$h(x) = \mathrm{Id}(x) = x$ the identity function, \eqref{eq:9} yields
the Stein kernel. For more information on the connection with Stein's
operators, see Section \ref{sec:conn-with-stein}.

\subsubsection{The discrete case} 
In the discrete case, simplifications {of
  $ \Gamma_{k}^{\pmb \ell}h(x)$} are more difficult {as
  \eqref{eq:phiex} depends strongly on the chosen sequence
  $\pmb\ell$}. Let
$\pmb \ell = (\ell_1, \ell_2, \ldots) \in \left\{ -1, +1
\right\}^{\infty}$. Recall the notations in \eqref{eq:6} and set
$a_{\ell_i} = a_i$, $b_{\ell_i} = b_i$ for $i\ge 1$.  Applying the definitions leads to 
  \begin{align}
&  \gamma_1^{\ell_1}h(x_1, x, x_2)  = (h(x_2)-h(x_1))
 \frac{\mathbb{I}[x_1 + a_1 \le x \le x_2 - b_1]}{p(x)}\label{eq:12}\\
& \gamma_2^{\ell_1, \ell_2}h(x_1, x, x_2) 
= \sum_{x_3 = x_1+ a_1}^{x-a_2}
\sum_{x_4 = x+ b_2}^{x_2-b_1}
(h(x_4)-h(x_3)) \Delta^{-\ell_1}h(x_3, x_4) 
\frac{\mathbb{I}[x_1 + a_1+a_2 \le x \le x_2 - b_1-b_2]}{p(x)}.\label{eq:13}
  \end{align}
  In order to generalize to arbitrary $k \ge3$,   we introduce 
  \begin{align}
  \label{eq:56}
  \mathbf{a}_k=  \sum_{i=1}^k a_i  \mbox{ and }  
   \mathbf{b}_{k}  = \sum_{i=1}^k b_i.
  \end{align}
  Note that $\mathbf{a}_{k} ( = \mathbf{a}_k(\pmb \ell)) $ counts the number of ``$+$'' in the first
  $k$ components of $\pmb \ell$ and $\mathbf{b}_{k}  ( = \mathbf{b}_k(\pmb \ell))$ counts the
  corresponding number of ``$-$'', {so that}
  $\mathbf{a}_k + \mathbf{b}_k = k$. 
Then  for $k \ge 2$ we have (sums over empty sets are set to 1): 

   \begin{align*}
    \gamma_{k}^{\pmb \ell}h(x_1, x, x_2)     & =
\left( \sum_{x_3=x_1+\mathbf{a}_{k-1}}^{x-a_{k}}\sum_{x_4=x+b_{k}}^{x_2 -\mathbf{b}_{k-1}}  (h(x_4)-h(x_3))
                                            \Delta^{-\ell_{k-1}}h(x_3,
                                                 x_4)  
          \sum_{x_{5}=x_1+\mathbf{a}_{k-2}}^{x_3-a_{k-1}}\sum_{x_6=x_4+b_{k-1}}^{x_2
                                                 -
                                                 \mathbf{b}_{k-2}}\Delta^{-\ell_{k-2}}h(x_5,
                                                 x_6)  \right.\\
  &    \qquad \cdots \left. \sum_{x_{2k-1}=x_1+a_{1}}^{x_{2k-3}-a_{2}}\sum_{x_{2k+1}=x_{2k-2}+b_{2}}^{x_2
                                            - b_1} \Delta^{-\ell_1}h(x_{2k-1},
                                        x_{2k})  \right)  \frac{ \mathbb{I}[x_1 +
                                            \mathbf{a}_{k}  \le x
                                         \le x_2 -  \mathbf{b}_{k} ]    }{p(x)}
  \end{align*}
  for all $x \in \mathcal{S}(p)$ and all $x_1, x_2$. This is a proof of the
  next result.
  \begin{prop}\label{lma:gamma2}
    Instate all previous notations.  For all $k \ge 1$,
  \begin{align*}
     \gamma_{k}^{\pmb \ell}h(x_1, x, x_2)     & =
\left( \sum_{x_3=x_1+\mathbf{a}_{k-1}}^{x-a_{k}}
 \sum_{x_4=x+b_{k}}^{x_2 -\mathbf{b}_{k-1}}
 (h(x_4)-h(x_3)) \psi^{\pmb \ell}_{k-1}h(x_1, x_3, x_4, x_2) \right) \frac{ \mathbb{I}[x_1 + \mathbf{a}_{k}  \le x  \le x_2 -  \mathbf{b}_{k} ]    }{p(x)}
  \end{align*}
  where $\psi^{\pmb \ell}_0h(x_1, x_3, x_4, x_2) = 1$ and, for
  $k \ge 2$,
  $\psi^{\pmb \ell}_{k-1}h(x_1, x_3, x_4, x_2) = \psi_{{k-1},1}^{\pmb
    \ell}h(x_1, x_3) \psi_{{k-1},2}^{\pmb \ell}h(x_4, x_2) $ and
  \begin{align*}
& \psi_{{k-1},1}^{\pmb \ell}h(x_1, x_3) =     \Delta^{-\ell_{k-1}}h(x_3)
                   \sum_{x_{5}=x_1+\mathbf{a}_{k-2}}^{x_3-a_{k-1}}
                   \left(  \Delta^{-\ell_{k-2}}h(x_5)
                   \sum_{x_{7}=x_1+\mathbf{a}_{k-4}}^{x_5-a_{k-2}} \left(  \cdots
\sum_{x_{2k-1}=x_1+a_{1}}^{x_{2k-3}-a_{2}}  
                   \Delta^{-\ell_1}h(x_{2k-1}) \right) \right)\\
    & \psi_{{k-1},2}^{\pmb \ell}h(x_4, x_2) =   \Delta^{-\ell_{k-1}}h(x_4)
      \sum_{x_6=x_4+b_{k-1}}^{x_2- \mathbf{b}_{k-2}} \left( 
      \Delta^{-\ell_{k-2}}h(x_6) \sum_{x_8=x_6+b_{k-2}}^{x_2-
      \mathbf{b}_{k-3}} \left(  \cdots
\sum_{x_{2k}=x_{2k-2}+b_{2}}^{x_2 - b_1}
  \Delta^{-\ell_1}h(x_{2k})  \right)\right)
  \end{align*}
  for all $x_1 + \mathbf{a}_{k-1} \le x_3 \le x_4 \le x_2 -\mathbf{b}_{k-1}$. 
\end{prop}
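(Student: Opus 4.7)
The plan is to prove the formula by direct expansion of the definition \eqref{eq:gammak}, specialised to $h_i=h$ for all $i$, using the explicit form \eqref{eq:phiex} of $\Phi_{k-1}^{\pmb\ell}$ together with the definition of $\Phi_p^{\ell_k}$. The structural point is that, once the sampling densities have been cancelled, the integrand splits cleanly into an \emph{odd-indexed} product (involving $x_3,x_5,\ldots,x_{2k-1}$ together with $x_1$) and an \emph{even-indexed} product (involving $x_4,x_6,\ldots,x_{2k}$ together with $x_2$), coupled only by the single term $h(x_{2k})-h(x_{2k-1})$ and by the central indicator $\chi^{\ell^2}(x_{2k-1},x_{2k})$.

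First I would expand $\gamma_k^{\pmb\ell}h(x_1,x,x_2)$ as a $(2k-2)$-fold sum against the counting measure, weighted by $\prod_{i=3}^{2k}p(x_i)$. The factor $\bigl(\prod_{i=3}^{2k}p(x_i)\bigr)^{-1}$ inside $\Phi_{k-1}^{\pmb\ell}$ cancels these sampling weights exactly, so only indicator functions and $h$-increments remain. The identity $\Delta^{-\ell_i}h(x_{2i+1},x_{2i+2})=\Delta^{-\ell_i}h(x_{2i+1})\cdot\Delta^{-\ell_i}h(x_{2i+2})$ then separates the summand into a pure odd-index factor and a pure even-index factor, except for the coupling pieces noted above.

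Next I would convert indicators into summation bounds. Writing $\chi^{\ell_i}(u,v)=\mathbb{I}[u\le v-a_i]$ in the odd chain $\prod_{i=1}^{k-1}\chi^{\ell_i}(x_{2i-1},x_{2i+1})$ and adjoining the outer constraint $x_{2k-1}\le x-a_k$ coming from $\Phi_p^{\ell_k}$, telescoping yields the nested lower bounds $x_{2j+1}\ge x_1+\mathbf{a}_j$ for $j=1,\ldots,k-1$ together with the innermost upper bound $x_{2k-1}\le x-a_k$. The dual computation on the even chain gives the upper bounds $x_{2j}\le x_2-\mathbf{b}_j$ and the lower bound $x_{2k}\ge x+b_k$. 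Compatibility of all these constraints forces the global indicator $\mathbb{I}[x_1+\mathbf{a}_k\le x\le x_2-\mathbf{b}_k]$ appearing in the statement.

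Finally, after relabelling the summation variables so that those closest to $x$ are summed outermost (this is the convention adopted in the statement, opposite to the labelling in \eqref{eq:phiex}), the resulting expression takes the form of two outermost sums over $x_3\in[x_1+\mathbf{a}_{k-1},x-a_k]$ and $x_4\in[x+b_k,x_2-\mathbf{b}_{k-1}]$ carrying the coupling factors $(h(x_4)-h(x_3))\,\Delta^{-\ell_{k-1}}h(x_3,x_4)$, followed by two fully decoupled nested sums which are exactly the claimed $\psi^{\pmb\ell}_{k-1,1}h(x_1,x_3)$ and $\psi^{\pmb\ell}_{k-1,2}h(x_4,x_2)$. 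The principal obstacle is purely notational: keeping track of the offsets $a_i,b_i$ and their partial sums $\mathbf{a}_j,\mathbf{b}_j$ through both chains and respecting the index relabelling. An induction on $k$ would make the bookkeeping transparent, the base case $k=1$ reducing to \eqref{eq:12} under the convention $\psi_0\equiv 1$ and the inductive step amounting to peeling off the outermost pair of sums identified above.
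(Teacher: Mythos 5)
Your proposal is correct and follows essentially the same route as the paper: the paper's proof is precisely the in-text direct expansion of \eqref{eq:gammak} via \eqref{eq:phiex}, cancelling the sampling densities, converting the $\chi^{\ell_i}$-indicators into nested summation bounds governed by $\mathbf{a}_j,\mathbf{b}_j$ (whose compatibility yields the global indicator), and factoring $\Delta^{-\ell_i}h(x_{2i+1},x_{2i+2})$ so the nested sums split into the two $\psi$-factors. Your suggested induction on $k$ is only a cosmetic reorganisation of the same bookkeeping.
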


 Taking expectations
in \eqref{eq:12} and \eqref{eq:13} we obtain
\begin{align}
  \label{eq:10}
&   \Gamma_1^{\ell_1}h(x) = \frac{1}{p(x)} \mathbb{E} \left[ (h(X_2) -
  h(X_1)) \mathbb{I}[X_1 + a_1 \le x \le X_2 - b_1] \right]\\
  & \Gamma_2^{\ell_1, \ell_2}h(x)  = \frac{1}{p(x)} \mathbb{E} \left[ \sum_{x_3 = X_1+
                                             a_1}^{x-a_2}\sum_{x_4 =
                                             x+ b_2}^{X_2-b_1}
                                             (h(x_4)-h(x_3))
                                             \Delta^{-\ell_1}h(x_3, x_4) 
{\mathbb{I}[X_1 + \mathbf{a}_2 \le x
                   \le X_2 - \mathbf{b}_2]} \right].
\end{align}
The expressions for higher orders are easy to infer, but this seems to
be the best we can do because the expressions in
Proposition~\ref{lma:gamma2} are obscure and, unfortunately, we have
not been able to devise a formula as transparent as \eqref{eq:57cont}
for general $h$ in the discrete case. Nevertheless, simple manageable
expressions are obtainable for certain specific choices of $h$,
particularly the case $h(x) = \mathrm{Id}(x)$ as we shall see in
Section
\ref{sec:some-gener-cons}.  

\subsubsection{Connection with Stein operators}
\label{sec:conn-with-stein}

In   \cite{ERS19vb1} we introduced the \emph{canonical inverse Stein
  operator}
  \begin{equation}\label{eq:stcaninv}
    \mathcal{L}_p^{\ell}h(x) =  \mathbb{E}\bigg[(h(X_1) -
    h(X_2)) \Phi_p^{\ell}(X_1, x, X_2)\bigg]
  \end{equation}
  for $h \in L^1(p)$ and $X_1, X_2$ independent copies of $X \sim p$.
  This operator has the property of yielding solutions to so-called
  Stein equations, both in discrete and continuous setting; it has
  many important properties within the context of Stein's method. In
  particular it provides generalized covariance identities and, when
  $h(x) = \mathrm{Id}(x)$ is the identity function, it provides
  \begin{align}
    \label{eq:14}
    \tau^{\ell}_p(x) =  {-   \mathcal{L}_p^{\ell} \mathrm{Id}(x)}
  \end{align}
  the all-important Stein kernel of $p$. This function, first
  introduced in \cite{stein1986}, has long been known to provide a
  crucial handle on the properties of $p$ and is now studied as an
  object of intrinsic interest, see e.g.\ \cite{courtade2017existence,fathi2018stein}.

  From \eqref{eq:9} and \eqref{eq:10}, we immediately recognize that
  $ \Gamma_1^{\ell_1}h(x) ={-\mathcal{L}_p^{\ell} h(x)}$, in other words
  the first order weight in our expansion is given by a Stein
  operator. There is also a connection between $\Gamma_k^{\pmb \ell}h$
  and ``higher order'' Stein kernels. To see this, restrict to the
  continuous case $\pmb \ell = 0$ and introduce
 $H^{k}_x(y) = (h(y) - h(x))^{k}/k!$. Then 
  \eqref{eq:57cont} becomes 
\begin{align}
\Gamma_{k}^{\pmb 0}h(x) 
& = (-1)^k \left( 
\mathbb{E}\big[H^{k-1}_x(X)\big] \mathcal{L}_p^0H^{k}_x(x) 
- \mathbb{E}\big[H^{k}_x(X)\big] \mathcal{L}_p^0H^{k-1}_x(x) 
    \right) 
    \label{eq:57cont2} 
\end{align}
(see the Appendix for a proof). In the case $h(x) = x$ the expression
\eqref{eq:57cont2} 
simplifies to Papathanasiou's weights
from \eqref{eq:101}.  This allows to make the connection between
considerations related to Stein's method and the weights appearing in
the expansions, 
{as} has already been observed 
(see e.g.\ \cite{APP07}). We do not pursue this line of research here,
{{except to point out that}}
our result provides a framework to the important
works  
\cite{papathanasiou1988variance,korwar1991characterizations,johnson1993note,APP07,AfBalPa2014},
which focus on particular families of distributions, see
Sections~\ref{sec:pearson} and \ref{sec:ord}.  Further study of this connection, in line
e.g.\ with \cite{fathi2018higher}, is outside the scope of this paper
and deferred to a future publication.

\subsection{Handpicking the test functions}
\label{sec:some-gener-cons}
We now focus on  particular choices of $h$. 
To begin with, we consider  the most intuitive choice (and the only one studied in
the literature): $h(x) = \mathrm{Id}(x)$. {{In this case we
    abbreviate $\Gamma_k^{\mathbf{\ell}} \mathbf{h} (x) =
    \Gamma_k^{\mathbf{\ell}}  (x)$.}} If $\pmb \ell = \pmb 0$ we have
  \begin{equation*}
    \Gamma_k^{\pmb 0}(x) = \frac{1}{ k!(k-1)! p(x)}\mathbb{E} \left[
      (X_2-x)^{k-1}(x-X_1)^{k-1}(X_2-X_1) \mathbb{I}[X_1 \le x \le X_2] \right].
  \end{equation*}
  The discrete case is less transparent, but direct computations for 
the first two weights in the discrete case lead to 
\begin{align*}
  & \Gamma_1^{\ell_1}(x) = \frac{1}{p(x)} \mathbb{E}[(X_2-X_1)
  \mathbb{I}[X_1+a_1 \le x \le X_2-b_1] ]\\
  &\Gamma_2^{\ell_1, \ell_2}(x) = \frac{1}{2p(x)} \mathbb{E}[ (X_2 - x
    -\mathbf{b}_2+1) (x-X_1 -
    \mathbf{a}_{2}+1)
    (X_2-X_1)
  \mathbb{I}[X_1+ \mathbf{a}_2 \le x \le X_2-\mathbf{b}_2] ].
\end{align*}

More
 generally we have the following. 
\begin{lma}\label{lma:identitycontdisc}
  If $\pmb \ell \in \left\{ -1, 1 \right\}^{\infty}$ then for all
  $k \ge 1$ 
   \begin{align}
     & \Gamma_{k}^{\pmb \ell}  (x) = 
\frac{1}{p(x) k! (k-1)!} \mathbb{E} \left[ (X_{2}-x-\mathbf{b}_{k}+1)^{[k-1]}
       (x-X_{1}-\mathbf{a}_k+1)^{[k-1]}(X_{{2}}-X_{1}) 
{ \mathbb{I}[X_1 + \mathbf{a}_{k}  \le x \le X_2 -
\mathbf{b}_{k}]} \right].  \label{eq:57Discretea} 
   \end{align}
We can unify the continuous and the discrete settings, to reap
\begin{align*}
\Gamma_k^{\pmb \ell}(x) =   \mathbb{E}
    \left[(X_{2}-x)_{\{k-1;\pmb\ell\}}
    (x-X_{1})^{\{k-1;\pmb\ell\}}(X_{{2}}-X_{1}) 
    \frac{ \mathbb{I}[X_1 + \mathbf{a}_{k}  \le x \le X_2 -
  \mathbf{b}_{k} ]}{p(x) k!(k-1)!}  \right]
\end{align*}
   where $f_{\{k,\pmb\ell\}}(x) =
   \prod_{j=1}^{k}f(x+\mathbf{a}_k-|\ell|j)$ and
   $f^{\{k,\pmb\ell\}}(x)  = \prod_{j=1}^{k}f(x-\mathbf{a}_k+|\ell|j)$ or equivalently
   \begin{align*}
   f_{\{k,\pmb\ell\}}(x) &= \begin{cases}
   f(x)^k & \text{ if } {\pmb\ell}={\pmb 0}, \\
   \prod_{j=1}^{k}f(x+\mathbf{a}_k-j) = f_{[k]}(x+\mathbf{a}_k-1) &\text{ else};
   \end{cases}  \\
f^{\{k,\pmb\ell\}}(x) &= \begin{cases}
   f(x)^k & \text{ if } {\pmb\ell}={\pmb 0}, \\
   \prod_{j=1}^{k}f(x-\mathbf{a}_k+j) = f^{[k]}(x-\mathbf{a}_k+1) &\text{ else}.
   \end{cases}    
   \end{align*}
    {and the empty product equals 1.}
  \end{lma}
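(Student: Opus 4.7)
The plan is to start from Proposition \ref{lma:gamma2} specialized to $h=\mathrm{Id}$ and evaluate the resulting nested sums combinatorially. Since $\Delta^{-\ell_i}\mathrm{Id}(x)=1$ for every $\ell_i \in \{-1,+1\}$, all the $\Delta^{-\ell_i}h$ factors drop out, and $h(x_4)-h(x_3)=x_4-x_3$. Thus
\begin{equation*}
\gamma_k^{\pmb\ell}\mathrm{Id}(x_1,x,x_2)=\frac{\mathbb{I}[x_1+\mathbf{a}_k\le x\le x_2-\mathbf{b}_k]}{p(x)}\sum_{x_3=x_1+\mathbf{a}_{k-1}}^{x-a_k}\sum_{x_4=x+b_k}^{x_2-\mathbf{b}_{k-1}}(x_4-x_3)\,\psi^{\pmb\ell}_{k-1,1}\mathrm{Id}(x_1,x_3)\,\psi^{\pmb\ell}_{k-1,2}\mathrm{Id}(x_4,x_2),
\end{equation*}
and the task reduces to evaluating the three sums above.

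The core combinatorial step is to compute the inner factors $\psi^{\pmb\ell}_{k-1,i}\mathrm{Id}$. Introducing the change of variables $z_j=x_{2j+1}-x_1-\mathbf{a}_{k-j}$ for $j=2,\ldots,k-1$, the chained ranges in $\psi^{\pmb\ell}_{k-1,1}\mathrm{Id}$ collapse (each $a_{k-j+1}$ is absorbed by the difference $\mathbf{a}_{k-j+1}-\mathbf{a}_{k-j}=a_{k-j+1}$) to the single monotonicity condition $0\le z_{k-1}\le\cdots\le z_2\le Z$ with $Z=x_3-x_1-\mathbf{a}_{k-1}$. The number of such tuples is $\binom{Z+k-2}{k-2}$, yielding
\begin{equation*}
\psi^{\pmb\ell}_{k-1,1}\mathrm{Id}(x_1,x_3)=\frac{(x_3-x_1-\mathbf{a}_{k-1}+1)^{[k-2]}}{(k-2)!},\qquad \psi^{\pmb\ell}_{k-1,2}\mathrm{Id}(x_4,x_2)=\frac{(x_2-x_4-\mathbf{b}_{k-1}+1)^{[k-2]}}{(k-2)!},
\end{equation*}
by the symmetric substitution on the right branch. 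This bookkeeping, where one must check that the constants $\mathbf{a}_j$ and the step sizes $a_j$ telescope consistently across all levels of the nested sum, is the main obstacle.

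Once the $\psi$-factors are known, substitute $u=x_3-x_1-\mathbf{a}_{k-1}\in[0,U]$ with $U=x-x_1-\mathbf{a}_k$, and $v=x_2-\mathbf{b}_{k-1}-x_4\in[0,V]$ with $V=x_2-x-\mathbf{b}_k$. Then $x_4-x_3=U+V+1-u-v$, and the outer double sum splits into three pieces evaluated via the standard rising/falling-factorial summations
\begin{equation*}
\sum_{u=0}^U (u+1)^{[k-2]}=\frac{(U+1)^{[k-1]}}{k-1},\qquad \sum_{u=0}^U u(u+1)^{[k-2]}=\sum_{u=0}^U u^{[k-1]}=\frac{U^{[k]}}{k},
\end{equation*}
(the second being hockey-stick in disguise). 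Using $U^{[k]}=U\cdot(U+1)^{[k-1]}$ and the corresponding identity in $v$, the bracket collapses after algebraic simplification to $(U+V+k)(U+1)^{[k-1]}(V+1)^{[k-1]}/[k(k-1)^2]$. Combining with the prefactor $1/((k-2)!)^2$ from the two $\psi$-factors gives $k!(k-1)!$ in the denominator. Since $U+V+k=x_2-x_1$, this produces
\begin{equation*}
\gamma_k^{\pmb\ell}\mathrm{Id}(x_1,x,x_2)=\frac{(x-x_1-\mathbf{a}_k+1)^{[k-1]}(x_2-x-\mathbf{b}_k+1)^{[k-1]}(x_2-x_1)}{p(x)\,k!(k-1)!}\mathbb{I}[x_1+\mathbf{a}_k\le x\le x_2-\mathbf{b}_k],
\end{equation*}
and taking $\mathbb{E}[\cdot]$ over two independent copies $X_1,X_2$ of $X$ delivers \eqref{eq:57Discretea}. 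The unified form follows by observing that the continuous case (Lemma \ref{lma:gamma2cont} with $h=\mathrm{Id}$) is recovered by setting $\mathbf{a}_k=\mathbf{b}_k=0$ and replacing rising factorials by plain powers, which is precisely what the notations $f_{\{k,\pmb\ell\}}$ and $f^{\{k,\pmb\ell\}}$ encode in the two regimes $|\ell|=0$ and $|\ell|=1$.
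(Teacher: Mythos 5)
Your argument is correct, but it follows a genuinely different route from the paper's. You evaluate the nested sums of Proposition \ref{lma:gamma2} directly: after noting $\Delta^{\pm1}\mathrm{Id}\equiv 1$, you collapse each inner chain by the change of variables $z_j$ into a count of weakly ordered integer tuples, obtaining the closed forms $\psi^{\pmb\ell}_{k-1,1}\mathrm{Id}(x_1,x_3)=(x_3-x_1-\mathbf{a}_{k-1}+1)^{[k-2]}/(k-2)!$ and its mirror image, and then you finish with the rising-factorial summation identities $\sum_{u=0}^U(u+1)^{[k-2]}=(U+1)^{[k-1]}/(k-1)$ and $\sum_{u=0}^U u^{[k-1]}=U^{[k]}/k$; I checked that the resulting bracket does simplify to $(U+V+k)(U+1)^{[k-1]}(V+1)^{[k-1]}/[k(k-1)^2]$, which with the $1/((k-2)!)^2$ prefactor gives exactly \eqref{eq:57Discretea} after taking the expectation over $X_1,X_2$ (the degenerate cases $k=1,2$ are covered trivially since the $\psi$-factors are then empty products). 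The paper instead proceeds by induction on $k$: it verifies $k=1,2$ from Proposition \ref{lma:gamma2}, uses the recursion $\gamma_{k+1}^{\pmb\ell}(x_1,x,x_2)=\mathbb{E}\bigl[\chi^{\ell_1}(x_1,X_3)\chi^{-\ell_1}(X_4,x_2)\gamma_k^{\ell_2,\ldots,\ell_{k+1}}(X_3,x,X_4)/(p(X_3)p(X_4))\bigr]$ to peel off the outermost pair of variables, and evaluates a single double sum at each step, with the continuous case delegated to Lemma \ref{lma:gamma2cont}. The two proofs ultimately rest on the same elementary hockey-stick identities; the induction keeps the bookkeeping minimal at each stage, whereas your direct evaluation makes the combinatorial content of the weights explicit (the inner chains are literally multiset counts), at the price of having to verify carefully that the shifts $\mathbf{a}_j$, $a_j$ telescope across all levels of the nested sum — which you assert plausibly but somewhat tersely; spelling out that $z_{j+1}\le z_j$ and $0\le z_j\le Z$ are exactly equivalent to the stated ranges would make that step airtight.
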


  \begin{rmk}
As already noted in Section \ref{sec:conn-with-stein}, the expression
of the weights in the continuous case is already known and can be
traced back to works as early as  \cite{papathanasiou1988variance};
the expression for the discrete case (namely equation \eqref{eq:57Discretea}) is new,
although a version with $\pmb \ell = (-1, -1, -1, \ldots)$ is
available from \cite{APP07}.     
  \end{rmk}

Another natural choice in the continuous case $\pmb \ell = 0$, of
increasing function $h$ to plug into the weights is $h(x) = P(x)$ with
$P$ the cdf of $p$. Then the following holds.
\begin{lma} \label{lma:cdf}
If $\pmb \ell = 0$ and $X \sim p$ has cdf $P$ then 
$    \Gamma_{k}^0P(x) = \frac{1}{k!(k+1)!p(x)}P(x)^k(1-P(x))^k.$
\end{lma}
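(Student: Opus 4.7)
The plan is to plug $h = P$ directly into the general continuous weight formula from Lemma \ref{lma:gamma2cont} and reduce the resulting expression to a Beta-type integral via the probability integral transform.

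Starting from
\begin{equation*}
\Gamma_k^0 P(x) = \frac{1}{k!(k-1)!\,p(x)} \mathbb{E}\!\left[ (P(x)-P(X_1))^{k-1} (P(X_2)-P(x))^{k-1} (P(X_2)-P(X_1))\, \mathbb{I}[X_1 \le x \le X_2] \right],
\end{equation*}
the first step is to use the probability integral transform: since $P$ is (under Assumption A, in the continuous case) nondecreasing with $P(X_i) \sim U(0,1)$ independent, I set $U_i = P(X_i)$ and $u = P(x)$. By monotonicity of $P$, the event $\{X_1 \le x \le X_2\}$ coincides (up to a null set) with $\{U_1 \le u \le U_2\}$, and the expectation becomes the elementary integral
\begin{equation*}
\int_0^u\!\int_u^1 (u-u_1)^{k-1}(u_2-u)^{k-1}(u_2-u_1)\, du_2\, du_1.
\end{equation*}

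The second step is to evaluate this integral. I would change variables to $a = u - u_1 \in [0,u]$ and $b = u_2 - u \in [0,1-u]$ (Jacobian $1$), using $u_2 - u_1 = a + b$, which decouples the integrand:
\begin{equation*}
\int_0^u\!\int_0^{1-u}\! \big[a^{k}b^{k-1} + a^{k-1}b^{k}\big]\, db\, da
= \frac{u^{k+1}}{k+1}\cdot\frac{(1-u)^k}{k} + \frac{u^k}{k}\cdot\frac{(1-u)^{k+1}}{k+1} = \frac{u^k(1-u)^k}{k(k+1)},
\end{equation*}
where the last equality uses $u + (1-u) = 1$.

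The final step is arithmetic: dividing by $k!(k-1)!\,p(x)$ and using $(k-1)!\cdot k = k!$ and $k!\cdot(k+1) = (k+1)!$ yields
\begin{equation*}
\Gamma_k^0 P(x) = \frac{P(x)^k(1-P(x))^k}{k!(k+1)!\,p(x)},
\end{equation*}
as claimed. There is no real obstacle here; the only subtle point is justifying the change of variables to uniforms, which relies on the continuity of $P$ (so that $P(X_i)$ is genuinely uniform) and on $P$ being increasing on $\mathcal{S}(p)$ so that the indicator transforms cleanly. Both are guaranteed under Assumption A in the continuous case.
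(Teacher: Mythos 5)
Your proof is correct and takes essentially the same route as the paper: both start from Lemma \ref{lma:gamma2cont} with $h=P$ and reduce the weight to Beta-type integrals of powers of $P(x)$ and $1-P(x)$, using the same splitting of $P(X_2)-P(X_1)$ into $(P(X_2)-P(x))+(P(x)-P(X_1))$ (the paper via \eqref{eq:gammatrick} and independence of $X_1,X_2$, you via the change of variables $a=u-u_1$, $b=u_2-u$ with $u_2-u_1=a+b$). The only cosmetic difference is that the paper factors the expectation into two one-dimensional integrals before substituting $u=P(\cdot)$, whereas you apply the probability integral transform first and evaluate the resulting double integral directly.
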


A final natural choice occurs whenever $p$ is
  log-concave. Indeed in this case the function $h_1=-(\log
  p)' 
  $ is
  increasing.  
  In particular,
  $\Gamma_1^0h_1(x)= -\mathcal{L}_p^0 h_1 (x) = 1$, which allows us to
  rewrite the first order expansion as
\begin{align*}
\mathrm{Cov} \left[f(X),g(X)\right] 
& = \E\left[  \frac{f'(X)g'(X)}{-(\log p)''(X)} \right]  - R_1^{0}(\mathbf{h}).
\end{align*}
This expression generalizes the Brascamp-Lieb inequality from
\cite{ERS19vb1}, see also \cite{ERS19vb1} for more information. {For
  simple expressions of $R_1^{0}(\mathbf{h})$ one may like to choose
  $h_2= h_3 = \cdots = \mathrm{Id}$. This example thus benefits from
  the flexibility in choosing a sequence of functions $\mathbf{h}$.}


\subsection{Illustrations}
\label{sec:illustrations-1}

\subsubsection{The weights for  Integrated Pearson  family}
\label{sec:pearson}

  \begin{defn}[Integrated Pearson] \label{defn:lagr-ident-infin-1}
We say that $X \sim p$ belongs to the integrated Pearson family  if $X$ is absolutely continuous and 
there exist $\delta, \beta, \gamma \in \R$ not all equal to 0 such
that  
  $\tau_p^{{\mathbf{0}}}(x)\big(:=-\mathcal{L}_p^{0}\mathrm{Id}(x)\big)=\delta x^2 + \beta x
  + \gamma$ for all $x \in \mathcal{S}(p)$. 
\end{defn}
Definition \ref{defn:lagr-ident-infin-1} corresponds to the continuous
Pearson systems, a.k.a.\ integrated Pearson, as studied e.g.\ in
\cite{ap14} (see their Definition 1.1).
  The following results hold (to facilitate comparison of the results
  we use  the same notations as in \cite{ap14}).

\begin{prop}\label{prop:perason}
  If $X \sim p$ is integrated Pearson distributed with Stein kernel
  $\tau_{p} (x)= {{\tau_p^{\mathbf{0}}(x) }} =-\mathcal{L}_p^0(\mathrm{Id})= \delta x^2 + \beta x + \gamma$ then
\begin{equation}
\Gamma_{k}^0(x) = \frac{\tau_p(x)^k}{k! \prod_{j=0}^{k-1}(1-j \delta)}.
\end{equation}
\end{prop}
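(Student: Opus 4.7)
The plan is to prove the identity by induction on $k$, the inductive step being the single recursion
\[
k(1-(k-1)\delta)\,\Gamma_{k}^{0}(x) \;=\; \tau_{p}(x)\,\Gamma_{k-1}^{0}(x).
\]
Iterating this back to the base case immediately gives the product formula $\tau_p(x)^k/(k!\prod_{j=0}^{k-1}(1-j\delta))$. The base case $k=1$ is handled by the identification $\Gamma_1^0(x) = -\mathcal{L}_p^0\mathrm{Id}(x)$ noted just after \eqref{eq:9}, combined with the defining relation $\tau_p = -\mathcal{L}_p^0\mathrm{Id}$ of the Stein kernel on the IP family.

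To establish the recursion I would start from the closed-form expression in Lemma~\ref{lma:gamma2cont} with $h=\mathrm{Id}$. Splitting $X_2-X_1 = (X_2-x)+(x-X_1)$ and exploiting the independence of $X_1,X_2$ rewrites the weight as
\[
k!(k-1)!\,p(x)\,\Gamma_k^0(x) \;=\; A_k(x)\,B_{k-1}(x) + A_{k-1}(x)\,B_k(x),
\]
where $A_k(x) := \int_x^\infty (z-x)^k p(z)\,dz$ and $B_k(x) := \int_{-\infty}^x (x-y)^k p(y)\,dy$ are the one-sided truncated moments of $p$. The only input specific to the IP family that I will use is the ODE $(\tau_p(y) p(y))' = -(y-\mu)p(y)$ (equivalent to $\mathbb{E}[\tau_p(X)g'(X)] = \mathbb{E}[(X-\mu)g(X)]$), together with the fact that the Taylor expansion $\tau_p(z) = \tau_p(x) + \tau_p'(x)(z-x) + \delta(z-x)^2$ is exact because $\tau_p$ is quadratic.

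Integrating by parts in each of $A_k$ and $B_k$ using this ODE, expanding $\tau_p$ around $x$ and collecting like powers of $(z-x)$ (resp.\ $(x-y)$), I expect to obtain the twin three-term recurrences
\begin{align*}
(1-(k-1)\delta)\,A_k(x) &= \alpha_k(x)\,A_{k-1}(x) + (k-1)\tau_p(x)\,A_{k-2}(x),\\
(1-(k-1)\delta)\,B_k(x) &= -\alpha_k(x)\,B_{k-1}(x) + (k-1)\tau_p(x)\,B_{k-2}(x),
\end{align*}
with $\alpha_k(x) = (k-1)\tau_p'(x)-(x-\mu)$; the opposite signs of $\pm\alpha_k$ reflect the opposite orientations of the two one-sided integrals. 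The key step is then to observe that when these are substituted into $A_k B_{k-1} + A_{k-1} B_k$ the two $\alpha_k\,A_{k-1}B_{k-1}$ contributions cancel exactly, leaving
\[
(1-(k-1)\delta)\bigl[A_k B_{k-1} + A_{k-1} B_k\bigr] \;=\; (k-1)\tau_p(x)\bigl[A_{k-1} B_{k-2} + A_{k-2} B_{k-1}\bigr],
\]
which after dividing by $p(x)\,k!(k-1)!$ is exactly the recursion between $\Gamma_k^0$ and $\Gamma_{k-1}^0$ announced above.

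The main obstacle will be the careful handling of the boundary terms in each integration by parts: the contributions at $z=x$ (respectively $y=x$) vanish for $k\ge 2$ thanks to the factor $(z-x)^{k-1}$ (respectively $(x-y)^{k-1}$), while vanishing at the endpoints of $\mathcal{S}(p)$ follows from the observation that $\tau_p(y)p(y)$ is, up to sign, a primitive of $(y-\mu)p(y)$ and hence necessarily decays at the boundary under the standing moment assumptions. The argument also tacitly requires each denominator $1-(k-1)\delta$ to be nonzero, which is the natural condition for the stated product formula to make sense.
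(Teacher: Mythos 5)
Your proposal is correct and takes essentially the same route as the paper's proof: both start from Lemma \ref{lma:gamma2cont} with $h=\mathrm{Id}$, factor the weight into one-sided expectations using the independence of $X_1,X_2$, apply the Stein-kernel integration by parts $\mathbb{E}[(X-\mu)f(X)]=\mathbb{E}[\tau_p(X)f'(X)]$ on each half-line, exploit the exactness of the quadratic Taylor expansion of $\tau_p$ around $x$, and iterate the recursion $k(1-(k-1)\delta)\Gamma_k^0(x)=\tau_p(x)\Gamma_{k-1}^0(x)$ from $\Gamma_1^0=\tau_p$. The only difference is bookkeeping: the paper splits $X_2-X_1$ through $\mu$ and recombines into a single expectation in which the $\tau_p'(x)$ contribution cancels inside the symmetric Taylor identity, whereas you split at $x$ and record three-term recurrences for the truncated moments $A_k,B_k$, where the same cancellation appears as the $\pm\alpha_k$ terms.
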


{
The coefficient $(\delta, \beta,\gamma)$ of the Stein kernel are explicitly given in \cite[Table 3]{ERS19vb1}. These coefficients allow us to directly obtain the infinite expansion of covariance for the integrated Pearson family. We give the expansions for two distributions in the following examples. 
}

\begin{exm}[Normal expansion] \label{ex:normbounds} {The standard
    normal distribution ${\phi}$ is an element of the integrated Pearson family
    with $\delta=0, \beta =0, $ and $\gamma=1$.}  Direct computations
  show that if $X \sim \mathcal{N}(0, 1)$ then $\tau_{\phi}(x) = 1$ so
  that $\Gamma_k^0(x) = \frac{1}{k!}$ for all $k$ and
  \begin{align*}
    \mathrm{Cov}[f(X), g(X)] &  =
                               \sum_{k=1}^{\infty}\frac{(-1)^{k-1}}{k!}\mathbb{E}
                               \left[ f^{(k)}(X)g^{(k)}(X)\right],
  \end{align*}
  which extends {the variance expansion} \eqref{eq:104} {to a covariance expansion.} 
  
\end{exm}

\begin{exm}[Beta expansion] \label{ex:betabounds}
{The Beta$(a, b)$ distribution is an element of the integrated Pearson family with $\delta=- \frac{1}{a +b}, \beta =\frac{1}{a+b}, $ and $\gamma=0$; then  $\tau_{\rm{Beta}(a,b)}(x) = \frac{x(1-x)}{a+b}$.}
  Direct computations show that if $X \sim \mathrm{Beta}(a,b)$ then
    $\Gamma_k^0(x) =  (x(1-x))^k/(k!(a+b)^{[k]})$ for $k\geq 1$, so that 
 \begin{align*}
   \mathrm{Cov}[f(X), g(X)] &  =
                              \sum_{k=1}^{\infty}\frac{(-1)^{k-1}}{k!(a+b)^{[k]}}\mathbb{E} \left[ f^{(k)}(X)g^{(k)}(X)X^k(1-X)^k\right].
 \end{align*}
    
\end{exm}

\subsubsection{The weights for Cumulative Ord family}\label{sec:ord}

{In this section the superscript $+$ denotes $\ell=1$ and the
  superscript $-$ denotes $\ell = -1$.}

\begin{defn}[Cumulative Ord families] \label{defn:lagr-ident-infin-1ORD}
  We say that $X \sim p$ belongs to the cumulative Ord family if $X$
  is discrete and there exist $\delta, \beta, \gamma \in \R$ not all
  equal to 0 such that
  $\tau_p^{{-}}(x)\big(:=-\mathcal{L}_p^{{-}}(\mathrm{Id})\big)=\delta
  x^2 +\beta x + \gamma$ for all $x \in \mathcal{S}(p)$. {(It follows  that for this distribution $p$, 
$\tau^{{+}}_p(x)=\frac{p(x-1)}{p(x)}\tau^-_p(x-1) = x(\delta x + \beta + 1)$.)
}
\end{defn}

%
The following results hold (to facilitate comparison of the results we
use the exact same notations as in \cite{APP07}).

\begin{prop}\label{prop:perasondisc}
 If $X \sim p$ is cumulative Ord
distributed with 
$\tau_p^{{-}}(x)
=\delta x^2 + \beta x + \gamma$ {(and hence $\tau^{{+}}_p(x)= x(\delta x + \beta + 1)$)}, then
\begin{equation}
\Gamma_{k}^{\pmb \ell}(x) = \frac{1}{k! \prod_{j=0}^{k-1}(1-j \delta)} \left(\tau_p^{{+}}(x)\right)_{[\mathbf{a}_k]}
\left(\tau_p^{{-}}(x)\right)^{[\mathbf{b}_k]}.
\end{equation}
\end{prop}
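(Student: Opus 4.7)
The plan is to proceed by induction on $k$, exploiting a crucial structural observation that is already embedded in Lemma \ref{lma:identitycontdisc}: the expression for $\Gamma_k^{\pmb\ell}(x)$ depends on the sequence $\pmb\ell$ only through the counts $(\mathbf{a}_k,\mathbf{b}_k)$ of $+1$'s and $-1$'s. Hence the claimed formula, which is symmetric in the ordering of $\pmb\ell$, is at least consistent, and we are free to pick any convenient permutation of the signs when carrying out the induction.

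The base case $k=1$ is immediate from \eqref{eq:10}: if $\ell_1=-1$ then $\mathbf{a}_1=0, \mathbf{b}_1=1$ and $\Gamma_1^-(x)=\tau_p^-(x)=(\tau_p^-(x))^{[1]}$, while if $\ell_1=+1$ then $\Gamma_1^+(x)=\tau_p^+(x)=(\tau_p^+(x))_{[1]}$; both match the right-hand side since $\prod_{j=0}^{0}(1-j\delta)=1$. For the inductive step, I plan to establish a two-sided recursion
\begin{align*}
\Gamma_k^{\pmb\ell}(x)\;=\;\frac{\tau_p^{-}(x+\mathbf{b}_{k-1})}{k\,(1-(k-1)\delta)}\,\Gamma_{k-1}^{\pmb\ell'}(x')\quad\text{if }\ell_k=-1,\qquad
\Gamma_k^{\pmb\ell}(x)\;=\;\frac{\tau_p^{+}(x-\mathbf{a}_{k-1})}{k\,(1-(k-1)\delta)}\,\Gamma_{k-1}^{\pmb\ell'}(x'')\quad\text{if }\ell_k=+1,
\end{align*}
where $\pmb\ell'=(\ell_1,\dots,\ell_{k-1})$ and the shifted argument is chosen so that the rising/falling factorial structure $(\tau_p^+)_{[\mathbf{a}_k]}(\tau_p^-)^{[\mathbf{b}_k]}$ builds up one factor at a time. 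Iterating this recursion $k-1$ times from the base case yields the claimed product formula, with the denominator $\prod_{j=0}^{k-1}(1-j\delta)$ accumulating one factor $(1-j\delta)$ at each step.

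To derive the recursion, I will use Lemma \ref{lma:identitycontdisc} to write $\Gamma_k^{\pmb\ell}(x)$ as a double sum over $x_1\le x-\mathbf{a}_k$ and $x_2\ge x+\mathbf{b}_k$ that factors through the independence of $X_1,X_2$ into a combination of one-sided sums of the form $\sum_{y\le x-\mathbf{a}_k}(x-y-\mathbf{a}_k+1)^{[j]}p(y)$ and $\sum_{z\ge x+\mathbf{b}_k}(z-x-\mathbf{b}_k+1)^{[j]}p(z)$. The key tool is the characteristic Ord identity $\tau_p^-(x)p(x)-\tau_p^-(x-1)p(x-1)=(\mu-x)p(x)$ (equivalently $p(x)\tau_p^+(x)=p(x-1)\tau_p^-(x-1)$), together with the fact that $\tau_p^-$ is \emph{quadratic}. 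A summation by parts on the rising factorial absorbs one power of the summation variable, and the quadratic nature of $\tau_p^-$ means that $(x-y-\mathbf{a}_k+1)^{[j]}\cdot\tau_p^-$ can be rewritten, modulo lower order terms, as $(1-(k-1)\delta)$ times a rising factorial of degree $j+1$, yielding the required $(1-(k-1)\delta)$ in the denominator.

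The main obstacle is bookkeeping in the summation-by-parts step: the rising factorials of lengths $k-1$ and $k$ interact with the shifts induced by $\tau_p^-(x-1)p(x-1)=p(x)\tau_p^+(x)$, and one must verify that the recurrence produces the precise shifts $x+\mathbf{b}_{k-1}$ (resp.\ $x-\mathbf{a}_{k-1}$) so that iteration yields exactly the product $(\tau_p^+(x))_{[\mathbf{a}_k]}(\tau_p^-(x))^{[\mathbf{b}_k]}$ rather than a version with misaligned arguments. Once that bookkeeping is done, matching the coefficient $k\cdot(1-(k-1)\delta)$ at each step follows from the degree-2 coefficient $\delta$ of $\tau_p^-$ combined with the standard identity $y^{[j]}(y+j-1)=y^{[j+1]}-\mathrm{(lower)}$ for rising factorials, which is essentially the only algebraic manipulation one needs throughout.
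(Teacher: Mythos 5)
Your plan is essentially the paper's own proof: it starts from the explicit discrete weight formula of Lemma \ref{lma:identitycontdisc}, writes $X_2-X_1=(X_2-\mu)+(\mu-X_1)$ and factors through the independence of $X_1,X_2$ into one-sided expectations, then applies discrete summation by parts via the Ord/Stein-kernel identity $p(x)\tau_p^+(x)=p(x-1)\tau_p^-(x-1)$ together with the quadratic form of $\tau_p^\pm$ (so that the $\delta$-term can be reabsorbed) to obtain exactly the one-step recursion $\Gamma_{k+1}^{\pmb\ell,+1}(x)=\frac{\tau_p^{+}(x-\mathbf{a}_k)}{(k+1)(1-k\delta)}\,\Gamma_k^{\pmb\ell}(x)$ and its $-1$ analogue with $\tau_p^{-}(x+\mathbf{b}_k)$, which is then iterated from the immediate $k=1$ base case. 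The only clarification worth making is that the recursion holds with $\Gamma_{k}^{\pmb\ell}$ evaluated at the \emph{same} point $x$ (no shifted arguments $x'$, $x''$ are needed); the shifts by $\mathbf{a}_k$, $\mathbf{b}_k$ occur only inside the $\tau_p^{\pm}$ prefactor, precisely so that iteration assembles the product $\left(\tau_p^{+}(x)\right)_{[\mathbf{a}_k]}\left(\tau_p^{-}(x)\right)^{[\mathbf{b}_k]}$.
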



\begin{rmk}
By taking only $k$ forward difference, i.e., ${\pmb
  \ell}=(-1,\ldots,-1)$, we deduce the result of \cite[Theorem
4.1]{APP07}. In particular, their Table 1 illustrates the expression
of $\Gamma^{\pmb \ell}_k(x)$ for some discrete distributions from
the cumulative Ord family. Tables at the end of
\cite{ERS19vb1}  give explicit expressions of  Stein kernels for many standard distributions.  
\end{rmk}

In the discrete case, there is much more flexibility in the
construction of the bounds as any permutation of $+1$ and $-1$ is
allowed 
for every $k$, leading to: 
\begin{align*}
    \mathrm{Var}[g(X)] &  =  \mathbb{E} \left[
                         \Gamma_1^+(X)    (\Delta^{-}g(X))^2 \right] -
                         R_1^{+}  = \mathbb{E} \left[
                         \Gamma_1^-(X)    (\Delta^{+}g(X))^2 \right] -
                         R_1^-
  \end{align*}
  and for an order 2 expansion, 
{for any of the four choices of $(\ell_1, \ell_2) \in \{ -1, +1\}^2$, }{\begin{align*}
 \mathrm{Var}[g(X)] &  =  \mathbb{E} \left[ \Gamma_1^{\ell_1} (X)(\Delta^{- \ell_1}g(X))^2 \right] -
 \mathbb{E} \left[ \Gamma_2^{\ell_1, \ell_2}(X) (\Delta^{-\ell_1, -\ell_2}g(X))^2 \right]  + 
                         R_2^{\ell_1, \ell_2}
\end{align*}
}{where we use the concise notation $\Delta^{\ell_1, \ell_2}g(X)$ for $\Delta^{\ell_2} \left( \Delta^{\ell_1}g(X)\right)$.}

\begin{exm}[Binomial expansion]\label{ex:binomgamma}
The Binomial$(n,\theta)$ distribution is an element of the cumulated
  Ord family with $\delta=0, \beta =-\theta, $ and $\gamma=n\theta$;
  its Stein kernels are $\tau^{-}(x) = \theta(n-x)$ and $\tau^{+}(x) =
  (1-\theta)x$. Hence 
  \begin{equation*}
    \Gamma_1^+(x) = (1-\theta) x, \quad   \Gamma_1^-(x) =  \theta(n-x)
  \end{equation*}
    so that the order 1 expansions are 
  \begin{align}
    \mathrm{Var}[g(X)] &  =  (1-\theta) \mathbb{E} \left[X (\Delta^{-}g(X))^2 \right] - R_1^{+} \label{var1}\\
    & = \theta \mathbb{E} \left[(n-X)  (\Delta^{+}g(X))^2 \right] -  R_1^-; \label{var2} 
  \end{align}
  choosing a linear combination of
      \eqref{var1} and \eqref{var2} with weights $\theta$ and
      $1-\theta$, respectively, yields  
   \begin{align}\label{eq:5}
    \mathrm{Var}[g(X)] &  =  n \theta (1 - \theta) \mathbb{E} \left[ \frac{X}{n}
        (\Delta^-g(X) )^2+ \frac{n-X}{n} (\Delta^+g(X))^2 \right] -
                         \theta R_1^+ - (1-\theta) R_1^-.
   \end{align}
    We note that \cite[Theorem 1.3]{hillion2011natural} introduce the
 ``natural binomial derivative''
$   \nabla_ng(x) =  \frac{x}{n}  \Delta^-g(x) + \frac{n-x}{n} \Delta^+g(x)$
and  prove -- by arguments which are specific to the binomial
distribution --  the Poincar\'e inequality
$$\mathrm{Var}[g(X)]   \le  n \theta (1 - \theta) \mathbb{E} \left[
  \big(\nabla_ng(X)\big)^2\right].$$ The connection with  \eqref{eq:5}
is easy to see because    (see e.g.\ \cite[Remark
  3.3]{hillion2011natural})
 \begin{align*}
                            \big(\nabla_ng(x)\big)^2  =
\frac{x}{n}
        (\Delta^-g(x) )^2+ \frac{n-x}{n} (\Delta^+g(x))^2  -
   \frac{x(n-x)}{n^2} (\Delta^{+-}g(x))^2.
 \end{align*}
Moving to the second order, direct computations show that
    \begin{align*}
&       \Gamma_2^{+,+}(x) = \frac{1}{2}(1-\theta)^2x(x-1) \mathbb{I}[1
                     \le x \le n],  \quad 
 \Gamma_2^{+,-}(x) =  \Gamma_2^{-,+}(x) =\frac{1}{2}\theta
                                      (1-\theta) x(n-x) \mathbb{I}[0
                                      \le x \le n]\\
      & \mbox{and }\Gamma_2^{-,-}(x) = \frac{1}{2}\theta^2(n-x)(n-x-1)
        \mathbb{I}[0 \le x \le n-1]                                      
    \end{align*}
    leading to the order 2  expansions
\begin{align*}
\mathrm{Var}[g(X)] 
&  =  (1-\theta) \mathbb{E} \left[ X  (\Delta^{-}g(X))^2 \right] - 
\frac{1}{2}(1-\theta)^2 \mathbb{E} \left[ X(X-1)  (\Delta^{- -}g(X))^2 \right]  +  R_2^{++}\\
&  =  (1-\theta) \mathbb{E} \left[ X  (\Delta^{-}g(X))^2 \right] -
\frac{1}{2}\theta (1-\theta)  \mathbb{E} \left[X(n-X) (\Delta^{-+}g(X))^2 \right]  +  R_2^{+-}\\
&  = \theta  \mathbb{E} \left[(n-X)   (\Delta^{+}g(X))^2 \right] -
\frac{1}{2}\theta (1-\theta) \mathbb{E} \left[ X(n-X) (\Delta^{+-}g(X))^2 \right]  + R_2^{-+} \\
&  = \theta\mathbb{E} \left[ (n-X)   (\Delta^{+}g(X))^2 \right] -
 \frac{1}{2}\theta^2 \mathbb{E} \left[ (n-X-1)(n-X) (\Delta^{+ +}g(X))^2 \right]  + R_2^{--}.
\end{align*}
Using the notation $\nabla_n$ from above, we deduce from a combination
of the second and third identities the lower variance bound
\begin{align*}
  \mathrm{Var}[g(X)] \ge n \theta (1-\theta) \left\{  \mathbb{E} \left[
  \big(\nabla_ng(X)\big)^2 \right] -  
 \frac{n-2}{2} 
  \mathbb{E} \left[ \frac{X(n-X)}{n^2} (\Delta^{+-}g(X))^2  \right]\right\}  .
\end{align*}
Combining these inequalities yields that for $0 < \theta < 1$, 
\begin{align*}
\mathbb{E} \left[
  \big(\nabla_ng(X)\big)^2 \right] - \frac{n-2}{2} 
  \mathbb{E} \left[ \frac{X(n-X)}{n^2} (\Delta^{+-}g(X))^2  \right]  \le  \frac{ \mathrm{Var}[g(X)]}{n \theta (1-\theta)} \le \mathbb{E} \left[
  \big(\nabla_ng(X)\big)^2 \right]. 
\end{align*}
\end{exm}

\subsubsection{Examples which are not integrated Pearson or cumulative Ord distributions}
\label{sec:some-examples}

\begin{exm}[Laplace expansion] \label{ex:laplacebounds} Direct
  computations show that if $X \sim \mathrm{Laplace}(0, 1)$ (i.e.\
  $p(x) = e^{-|x|}/2$ on $\R$)  then 
  {$\Gamma_1^0(x) = 1+|x|$ and
    $\Gamma_2^0(x) = \frac{1}{2}x^2 + |x| + 1$} so that the first two
  bounds become
  \begin{align*}
    \mathrm{Var}[g(X)] &  =  
   { \mathbb{E} \left[(1+|X|)g'(X)^2 \right] - R_1 }\\
& =  \mathbb{E} \left[(1+|X|) g'(X)^2 \right] -
    \mathbb{E} \left[ (1+|X|+ X^2/2) g''(X)^2 \right] +  R_2.
  \end{align*}
    Despite this distribution not being a member of the Pearson
    family, the general expression for $\Gamma_k$ is quite {simple:}
  \begin{equation*}
    \Gamma_k^0(x) = \sum_{j=0}^k \frac{|x|^j}{j!}.
  \end{equation*}
  The structure of this sequence seems to indicate that this
  distribution is of a different nature {than integrated Pearson
    distributions}; this is also illustrated in the properties of the
  corresponding Stein operator (which is best described as a second
  order differential operator), see
  \cite{eichelsbacher2015malliavin,PR12}.
\end{exm}

\begin{exm}[Rayleigh expansion] \label{ex:rayleighbounds} Direct
  computations show that if $X \sim \mathrm{Rayleigh}(0, 1)$ (i.e.\
  $p(x) = x e^{-x^2{/2}}$ on $\R^+$) then $\tau_p^0(x)$ does not take on
  an agreeable form.
  Nevertheless the choice $h(x) = x^2$ leads to
  \begin{equation*}
    \frac{\Gamma_k^0h(x)}{h'(x)} =  \frac{2^{k-2}}{k!}x^{2(k-1)}.
  \end{equation*}

\end{exm}

\begin{exm}[Cauchy expansion]
  The {standard} Cauchy distribution  lacks
  moments; nevertheless taking $h(x) = \arctan(x)$ leads to
  \begin{equation*}
    \frac{\Gamma_k^0(x)}{h'(x)} = \frac{1}{4^k(k+1)!(k)!}(1+x^2)^2
    \left( \pi^2-4\arctan(x)^2 \right)^k. 
  \end{equation*}
  
\end{exm}

\begin{exm}[Levy expansion] {The $pdf$ of the standard Levy distribution is given by 
$ (2 \pi)^{-\frac12} e^{\frac{1}{2x}} x^{-\frac32} $. Similarly as in the previous example, } 
   taking $h(x) = P(x)$, 
  \begin{equation*}
    \frac{\Gamma_k^0(x)}{h'(x)} = \binom{k+1}{2}\frac{1}{k!(k+1)!} \pi e^{1/x} x^3
    \big((1- P(x)) P(x) \big)^k. 
  \end{equation*}
  
\end{exm}

\section*{Acknowledgements}

The research of YS was partially supported by the Fonds de la
Recherche Scientifique -- FNRS under Grant no F.4539.16.  ME
acknowledges partial funding via a Welcome Grant of the Universit\'e
de Li\`ege and via the Interuniversity Attraction Pole StUDyS (IAP/P7/06). 
YS also thanks Lihu Xu for organizing the ``Workshop on
Stein's method and related topics'' at University of Macau in December
2018, and where this contribution was first presented. GR and YS also
thank Emilie Clette for fruitful discussions on a preliminary version
of this work. YS thanks Jean-Pierre Schneiders for discussions on the
weights. We also thank Benjamin Arras for several pointers to
relevant literature, as well as corrections on the first draft of the
paper.

\bibliographystyle{spmpsci} 
\bibliography{biblio_ysersvc}

\appendix
\section{Proofs}

\begin{proof}[Proof of Lemma \ref{lma:var1}]
The equivalence between \eqref{eq:27bis} and \eqref{eq:27} follows from the fact that
  $\mathbb{I}[X_1<X_2]+\mathbb{I}[X_1=X_2]+\mathbb{I}[X_1>X_2]=1$
  and
  $$\E\left[ \big(f(X_2)-f(X_1) \big) \big(g(X_2)-g(X_1) \big)
    \mathbb{I}{[X_1<X_2]} \right]=\E\left[ \big(f(X_2)-f(X_1) \big)
    \big(g(X_2)-g(X_1) \big) \mathbb{I}{[X_2<X_1]} \right].$$
{Without loss of generality in \eqref{eq:27bis} it can be assumed that $\E [f(X)] = \E [g(X)] =0$. Evaluating the expectation \eqref{eq:27bis} through expanding the product yields the assertion.}
%
\end{proof}

\begin{proof}[Proof of Lemma \ref{lma:matrixcauchyschwarz}]
First, from {\eqref{eq:76} in} Lemma \ref{lem:chirules} it follows directly that 
\begin{align}
  \label{eq:51b}
  \Phi^\ell_p(u, x_1, x_2, v) \mathbb{I}[x_1 \ne x_2]
  =  \mathbb{I}[x_1 \ne x_2] \chi^{\ell^2}(x_1, x_2)  \Phi^\ell_p(u, x_1, v)  \Phi^\ell_p(u, x_2, v). 
  \end{align}
 With the abbreviations as introduced in the statement of the lemma, the $(i,j)$ entry of the $r \times r$ matrix  $R(u,v)$ is 
   \begin{eqnarray*}
{(R(u,v))_{i,j} }
    &:=&   \mathbb{E} \left[ (v_{i3} g_4 - v_{i4} g_3) (v_{j3} g_4 - v_{j4} g_3)  \Phi_p^{\ell}(u, X_3, X_4, v)
     \right] \\
     &=&  \mathbb{E} \left[\mathbb{I}[X_3 \ne X_4]  (v_{i3} g_4 - v_{i4} g_3) (v_{j3} g_4 - v_{j4} g_3)  \chi^{\ell^2}(X_3, X_4)  \Phi^\ell_p(u, X_3, v)  \Phi^\ell_p(u, X_4, v)
     \right] ,
   \end{eqnarray*} 
   where we used \eqref{eq:51b} in the last step. 
   Next, again using Lemma \ref{lem:chirules}, 
   $ \mathbb{I}[x_1 \ne x_2] ( \chi^{\ell^2}(x_1, x_2)  +  \chi^{\ell^2}(x_2, x_1))
 =   \mathbb{I}[x_1 \ne x_2]$
   and by symmetry, 
    \begin{eqnarray*} 
    \lefteqn{ \mathbb{E} \left[\mathbb{I}[X_3 \ne X_4]  (v_{i3} g_4 - v_{i4} g_3) (v_{j3} g_4 - v_{j4} g_3)  \chi^{\ell^2}(X_3, X_4)  \Phi^\ell_p(u, X_3, v)  \Phi^\ell_p(u, X_4, v)
     \right]}\\
     &=&  \mathbb{E} \left[\mathbb{I}[X_4 \ne X_3]  (v_{i3} g_4 - v_{i4} g_3) (v_{j3} g_4 - v_{j4} g_3)  \chi^{\ell^2}(X_4, X_3)  \Phi^\ell_p(u, X_3, v)  \Phi^\ell_p(u, X_4, v)
     \right] .
     \end{eqnarray*} 
     Thus 
       \begin{eqnarray*}
{2 (R(u,v))_{i,j} }
    &=&  \mathbb{E} \left[\mathbb{I}[X_3 \ne X_4]  (v_{i3} g_4 - v_{i4} g_3) (v_{j3} g_4 - v_{j4} f_3)  \chi^{\ell^2}(X_3, X_4)  \Phi^\ell_p(u, X_3, v)  \Phi^\ell_p(u, X_4, v)
     \right] \\
     &&+  \mathbb{E} \left[\mathbb{I}[X_4 \ne X_3]  (v_{i3} g_4 - v_{i4} g_3) (v_{j3} g_4 - v_{j4} g_3)  \chi^{\ell^2}(X_4, X_3)  \Phi^\ell_p(u, X_3, v)  \Phi^\ell_p(u, X_4, v)
     \right] \\
     &=& \mathbb{E} \left[\mathbb{I}[X_3 \ne X_4]  (v_{i3} g_4 - v_{i4} g_3) (v_{j3} g_4 - v_{j4} g_3)  \Phi^\ell_p(u, X_3, v)  \Phi^\ell_p(u, X_4, v)
     \right] \\
     &=& \mathbb{E} \left[  (v_{i3} g_4 - v_{i4} g_3) (v_{j3} g_4 - v_{j4} g_3)  \Phi^\ell_p(u, X_3, v)  \Phi^\ell_p(u, X_4, v)
     \right] .
   \end{eqnarray*} 
     Now we exploit the independence of $X_3$ and $X_4$ to obtain 
      \begin{eqnarray*}
2 (R(u,v))_{i,j} 
  &=& 2 \mathbb{E} \left[  v_{i3} v_{j3} \Phi^\ell_p(u, X_3, v)  \right] 
        \mathbb{E} \left[  g_4^2 \Phi^\ell_p(u, X_4, v)  \right] - 2 \mathbb{E} \left[ v_{i3} g_3   \Phi^\ell_p(u, X_3, v) \right]
        \mathbb{E} \left[ v_{j4} g_4   \Phi^\ell_p(u, X_4, v) \right].
   \end{eqnarray*} 
  The assertion follows by dividing by 2 and re-arranging the equation.

\end{proof}

\begin{proof}[Proof of Theorem \ref{thm:var-}]

{First by direct verification we note that the following recursion for $\Phi_n^{\pmb{\ell}}$ holds. 
Starting from 
  $\Phi_{1}^{\pmb{\ell}}(x_1,x_3, x_4, x_2) = \Phi^{\ell_1}_p(x_1, x_3, x_4,
  x_2)$ we have for $n \ge 2$
\begin{align}
 &      \Phi_{n}^{\pmb{\ell}}(x_1, x_3, \ldots, x_{2n-1}, x_{2n+1},
   x_{2n+2}, x_{2n}, \ldots, x_2) \nonumber \\
&  \qquad =
   \Phi^{\ell_{n}}_p(x_{2n-1}, x_{2n+1},
   x_{2n+2}, x_{2n})
    \Phi_{n-1}^{\pmb{\ell}}(x_1, x_3, \ldots, x_{2n-1}, x_{2n}, \ldots,
   x_2)  \label{eq:38}
\end{align}
for any sequence $(x_j)_{j\ge1}$.  We abbreviate
\begin{align}
 \Phi_{n,1}^{\pmb{\ell}}(x_1, x_3, \ldots, x_{2n-1}, x, x_{2n}, \ldots,
   x_2)
&  =
  \Phi^{\ell_{n}}_p(x_{2n-1}, x, x_{2n})
    \Phi_{n-1}^{\pmb{\ell}}(x_1, x_3, \ldots, x_{2n-1}, x_{2n}, \ldots, x_2)\label{eq:50}.
\end{align}
}
The proof uses induction in $n$. First consider $n=1$. {Let $X_1, X_2, X_3,X_4$ be independent copies of $X$.} Starting from
  \eqref{eq:27}, 
  \begin{eqnarray*}
  \mathrm{Cov} \left[
\mathbf{f}(X)  \right]
&&=  \E [ (\mathbf{f}(X_2) - \mathbf{f}(X_1))(\mathbf{f}(X_2) -
    \mathbf{f}(X_1)) '  \mathbb{I}[X_1 < X_2]    ]\\ 
&&= \E \left[  \mathbb{E}\left[   \Phi^{\ell_1}_p(X_1, X_3, X_2)
    \Delta^{-\ell_1}\mathbf{f}(X_3)  \,| \, X_1, X_2 \right]
 \mathbb{E}\left[   \Phi^{\ell_1}_p(X_1, X_4, X_2)
    \Delta^{-\ell_1}\mathbf{f}(X_4) \,| \, X_1, X_2 \right]' \mathbb{I}[X_1 < X_2]    
  \right] 
  \end{eqnarray*} 
where we used \eqref{eq:28} in the last step. Now  for any $h_1$ such that $\mathbb{P}[\Delta^{-\ell_1}h_1(X)>0]=1$, dividing and multiplying by $ \sqrt{\Delta^{-\ell_1}h_1(X)}$ and applying Lemma \ref{lma:matrixcauchyschwarz} (Lagrange identity) with 
\begin{equation}\label{vg} 
\mathbf{v} (x) = \frac{ \Delta^{-\ell_1}\mathbf{f}(x)}{\sqrt{\Delta^{-\ell_1}h_1(x)}} \quad \mbox{ and } 
g(x) = \sqrt{\Delta^{-\ell_1}h_1(x)}
\end{equation}  
gives {note re-arrangement} 
 \begin{eqnarray}
\lefteqn{\mathrm{Cov} \left[ \mathbf{f}(X)  \right]  + \E \left[ R^{\ell_1}(X_1, X_2; \mathbf{v}, g)  \mathbb{I}[X_1 < X_2]   \right] } \nonumber\\
&=&  \E \left[  \mathbb{E} \left[ \mathbf{v}(X)\mathbf{v}'(X)
      \Phi_p^{\ell_1}(X_1,X, X_2)   \,| \, X_1, X_2\right]  
      \mathbb{E} \left[ g^2(X) \Phi_p^{\ell_1}(X_1, X, X_2) \,| \, X_1, X_2 \right]  \mathbb{I}[X_1 < X_2]   \right] \nonumber  \\
&=& 
  \E \left[  \mathbb{E} \left[ 
  \frac{\Delta^{-\ell_1}\mathbf{f}(X)\Delta^{-\ell_1}\mathbf{f}'(X)}{ \Delta^{-\ell_1}h_1(X)} \Phi_p^{\ell_1}(X_1,X, X_2) | X_1, X_2  \right]  
  \mathbb{E} \left[\Delta^{-\ell_1}h_1(X) \Phi_p^{\ell_1}(X_1, X, X_2) | X_1, X_2  \right] \mathbb{I}[X_1 < X_2]   \right] \nonumber \\
&=& 
 \E \left[  \mathbb{E} \left[ 
      \frac{\Delta^{-\ell_1}\mathbf{f}(X) \Delta^{-\ell_1}\mathbf{f}'(X) }{ \Delta^{-\ell_1}h_1(X)}  \Phi_p^{\ell_1}(X_1,X, X_2) | X_1, X_2 \right]  
      (h_1(X_2) - h(X_1)  ) \mathbb{I}[X_1 < X_2]   \right]\label{expand}
     \end{eqnarray} 
with the last equality following from \eqref{eq:28}. Note that, in the discrete case, the strict inequality in the indicator
$\mathbb{I}{[X_1<X_2]}$ is implicit in
$\Phi^{\ell_1}_p(X_1, X, X_2) = \chi^{\ell_1}(X_1, X)\chi^{-\ell_1}(X,
X_2)/p(X)$ (and hence a fortiori also in
$\Phi^{\ell_1}_p(X_1, X_3, X_4, X_2)$; in the continuous case there is
no difference between $\mathbb{I}{[X_1<X_2]}$ and
$\mathbb{I}{[X_1\le X_2]}$.
Hence unconditioning yields
      \begin{eqnarray*}
\lefteqn{
\E \left[  
\frac{\Delta^{-\ell_1}\mathbf{f}(X) \Delta^{-\ell_1}\mathbf{f}'(X) }{ \Delta^{-\ell_1}h_1(X)} \Phi_p^{\ell_1}(X_1,X, X_2) (h_1(X_2) - h_1(X_1)  ) \mathbb{I}[X_1 < X_2]   \right]}\\
 &=& 
\E \left[  
\frac{\Delta^{-\ell_1}\mathbf{f}(X) \Delta^{-\ell_1}\mathbf{f}'(X) }{ \Delta^{-\ell_1}h_1(X)} \Phi_p^{\ell_1}(X_1,X, X_2) (h_1(X_2) - h_1(X_1)  )  \right]\\
 &=& 
 \E \left[  
 \Delta^{-\ell_1}\mathbf{f}(X) \Delta^{-\ell_1}\mathbf{f}'(X)
   \frac{\Gamma_1^{\ell_1} h_1(X) }{ \Delta^{-\ell_1}h_1(X)} \right],
\end{eqnarray*}  
giving the first term in the covariance expansion   \eqref{eq:varianceexpansion}. 
With the notation \eqref{vg}, the  {remainder} term in \eqref{expand} is
\begin{eqnarray*}
\lefteqn{  \E \left[R^{\ell_1}(X_1, X_2; \mathbf{v}, g) \mathbb{I}[X_1 < X_2]  \right] }\\
&=&   
\E \left[ 
\mathbb{E} \left[ (\mathbf{v}_3 g_4 - \mathbf{v}_4 g_3)  (\mathbf{v}_3 g_4 - \mathbf{v}_4 g_3)' \Phi_p^{\ell_1}(X_1, X_3, X_4, X_2)  | X_1, X_2 \right]
\mathbb{I}[X_1 < X_2] \right] .  
\end{eqnarray*} 
 Now, 
\begin{eqnarray*}
 \mathbf{v}_3 g_4  =  \frac{ \Delta^{-\ell_1}\mathbf{f}(X_3)}{\sqrt{\Delta^{-\ell_1}h_1(X_3)}} \sqrt{\Delta^{-\ell_1}h_1(X_4 )} 
  =  \frac{ \Delta^{-\ell_1}\mathbf{f}(X_3)}{{\Delta^{-\ell_1}h_1(X_3)}} \sqrt{{\Delta^{-\ell_1}h_1(X_3)}\Delta^{-\ell_1}h_1(X_4 )}
\end{eqnarray*} 
and $\sqrt{{\Delta^{-\ell_1}h_1(X_3)}\Delta^{-\ell_1}h_1(X_4 )}$ is a common factor, so that 
       \begin{eqnarray*}
\lefteqn{  \E \left[R^{\ell_1}(X_1, X_2; \mathbf{v}, g) \mathbb{I}[X_1 < X_2]  \right] }\\
&=&   
\E \left[  \left(\frac{ \Delta^{-\ell_1}\mathbf{f}(X_3)}{{\Delta^{-\ell_1}h_1(X_3)}} - \frac{ \Delta^{-\ell_1}\mathbf{f}(X_4)}{{\Delta^{-\ell_1}h_1(X_4)}}\right) 
     \left(\frac{ \Delta^{-\ell_1}\mathbf{f}(X_3)}{{\Delta^{-\ell_1}h_1(X_3)}} - \frac{ \Delta^{-\ell_1}\mathbf{f}(X_4)}{{\Delta^{-\ell_1}h_1(X_4)}}\right)'   \right. \\
    && \quad \left. \times  
     \left( \sqrt{{\Delta^{-\ell_1}h_1(X_3)}\Delta^{-\ell_1}h_1(X_4 )}\right)^2
      \Phi_p^{\ell_1}(X_1, X_3, X_4, X_2) \mathbb{I}[X_1 < X_2]
\right] \\
&=& \E \left[   (\mathbf{f}_{1}(X_3) -  \mathbf{f}_{1}(X_4))  (\mathbf{f}_{1}(X_3) -  \mathbf{f}_{1}(X_4))' 
  \Delta^{-\ell_1}h_1(X_3) \Delta^{-\ell_1}h_1(X_4 ) 
   \Phi_p^{\ell_1}(X_1, X_3, X_4, X_2)
\right]   \\
&=& R_1^{\ell_1}(\mathbf{h})  
     \end{eqnarray*} 
    as required; here $\mathbf{h} = h_1$.   Thus the assertion holds for $n=1$.

 To obtain the complete claim, we proceed by induction and suppose
 that the claim holds at some $n$. It  remains to show that
 \begin{align}
   \label{eq:88}
   R_n^{\pmb \ell}(\mathbf{h}) = 
   \mathbb{E} \left[ \Delta^{-\ell_{n+1}} \mathbf{f}_n(X) 
   \Delta^{-\ell_{n+1}}   \mathbf{f}'_n(X)
   \frac{\Gamma_{n+1}^{\pmb\ell}\mathbf{h}(X)}{\Delta^{-\ell_{n+1}}h_{n+1}(X)}
   \right]  - R_{n+1}^{\pmb \ell}(\mathbf{h}).
 \end{align}
To this purpose, starting from
 \eqref{eq:89}, we simply apply the same process as above: for $x_{2n+1}<x_{2n+2}$, we use 
 \begin{align*}
 \mathbf{f}_n(x_{2n+2}) - \mathbf{f}_n(x_{2n+1}) 
 & =  \mathbb{E}\left[ \Delta^{-\ell_{n+1}} \mathbf{f}_n(X) 
    \Phi^{\ell_{n+1}}_p(x_{2n+1}, X, x_{2n+2}) \right] 
 \end{align*}
 as well as the Lagrange identity  \eqref{eq:75} and simple
 conditioning to obtain that 
 \begin{eqnarray*}
  R_n^{\pmb \ell}(\mathbf{h})
  & = & 
  \E \Bigg[ \left(\mathbf{f}_n(X_{2n+2})- \mathbf{f}_n(X_{2n+1})\right)
   \left(\mathbf{f}_n(X_{2n+2})- \mathbf{f}_n(X_{2n+1})\right)'                         \\
& &
\quad \quad \Phi_{n}^{\pmb{\ell}}(X_1, \ldots X_{2n+1},  X_{2n+2}, \ldots,  X_2) \prod_{i=1}^n   \Delta^{-\ell_{i}} h_i(X_{2i+1}, X_{2i+2}) \Bigg] \\
 &=& 
  \E \Bigg[ 
  \mathbb{E} \left[ \Delta^{-\ell_{n+1}} \mathbf{f}_n(X_{2n+3}) 
    \Phi^{\ell_{n+1}}_p(X_{2n+1}, X_{2n+3}, X_{2n+2}) |  X_{2n+1},  X_{2n+2}\right] 
      \\
&& 
\quad   \quad   \mathbb{E}\left[\Delta^{-\ell_{n+1}} \mathbf{f}'_n(X_{2n+4})
    \Phi^{\ell_{n+1}}_p(X_{2n+1}, X_{2n+4}, X_{2n+2}) |  X_{2n+1},  X_{2n+2}\right] 
\\
  & &
\quad \quad \Phi_{n}^{\pmb{\ell}}(X_1, \ldots X_{2n+1},  X_{2n+2}, \ldots,  X_2) \prod_{i=1}^n   \Delta^{-\ell_{i}} h_i(X_{2i+1}, X_{2i+2})  \Bigg] .
 \end{eqnarray*}  
 Now  for any $h_{n+1}$ such that $\mathbb{P}[\Delta^{-\ell_{n+1}}h_{n+1}(X)>0]=1$, dividing and multiplying by $ \sqrt{\Delta^{-\ell_{n+1}}h_{n+1}(X)}$ and applying Lemma \ref{lma:matrixcauchyschwarz} with 
\begin{equation}\label{vgn} 
\mathbf{v}_{n+1} (x) = \frac{ \Delta^{-\ell_{n+1}}\mathbf{f}_n(x)}{\sqrt{\Delta^{-\ell_{n+1}}h_{n+1}(x)}} \quad \mbox{ and } g_{n+1}(x) = \sqrt{\Delta^{-\ell_{n+1}}h_{n+1}(x)}
\end{equation}  
we obtain  with \eqref{eq:2} 
 \begin{eqnarray} \label{expand2} 
  \lefteqn{  R_n^{\pmb \ell}(\mathbf{h}) - \E\bigg[ \E \left[ R^{\ell_{n+1}}(X_{2n+1}, X_{2n+2}; \mathbf{v_{n+1}}, g_{n+1})  |  X_{2n+1},  X_{2n+2}  \right]  } \nonumber \\
  &&\quad \quad \quad \mathbb{I}[X_{2n+1}<X_{2n+2}] \Phi_{n}^{\pmb{\ell}}(X_1, \ldots X_{2n+1},  X_{2n+2}, \ldots,  X_2) \prod_{i=1}^n   
\Delta^{-\ell_{i}} h_i(X_{2+1}, X_{2i+2})    \bigg] \nonumber \\
&=& 
\E \bigg[   \mathbb{E} \left[ \mathbf{v}_{n+1}(X)\mathbf{v}'_{n+1}(X) 
      \Phi_p^{\ell_{n+1}}(X_{2n+1},X, X_{2n+2})  |  X_{2n+1},  X_{2n+2} \right] \nonumber \\
&& 
\quad \quad  
\times \mathbb{E} \left[ g_{n+1}^2(X) \Phi_p^{\ell_{n+1}}(X_{2n+1}, X, X_{2n+2})  |  X_{2n+1},  X_{2n+2}\right]     \nonumber \\
&&   
\quad \quad \mathbb{I}[X_{2n+1}<X_{2n+2}] \Phi_{n}^{\pmb{\ell}}(X_1, \ldots X_{2n+1},  X_{2n+2}, \ldots,  X_2) \prod_{i=1}^n   
\Delta^{-\ell_{i}} h_i(X_{2+1}, X_{2i+2})    \bigg] \nonumber \\
 &=& 
\E \Big[   
\mathbb{E} \left[ \mathbf{v}_{n+1}(X)\mathbf{v}'_{n+1}(X) 
 \Phi_p^{\ell_{n+1}}(X_{2n+1},X, X_{2n+2})  \right]  
 (h_{n+1}(X_{2n+2} ) - h_{n+1}(X_{2n+1}))   \nonumber  \\
&& 
\Phi_{n}^{\pmb{\ell}}(X_1, \ldots X_{2n+1},  X_{2n+2}, \ldots,  X_2) 
\prod_{i=1}^n  \Delta^{-\ell_{i}} h_i(X_{2i+1}, X_{2i+2})  \Big] \nonumber \\
&=&  
\mathbb{E} \left[
 \Delta^{-\ell_{n+1}} \mathbf{f}_n(X)  
 \Delta^{-\ell_{n+1}} \mathbf{f}'_n(X)
 \frac{\Gamma_{n+1}^{\pmb\ell}\mathbf{h}(X)}{\Delta^{-\ell_{n+1}}h_{n+1}(X)}  \right] 
 \end{eqnarray} 
where we used \eqref{vgn} in the last step. Thus we have recovered the first summand in \eqref{eq:88}. For the {remainder term} in \eqref{expand2}, leaving out the negative sign, the notation \eqref{vgn} gives  
 \begin{eqnarray*} 
 \lefteqn{\E \big[  
  \E \left[ R^{\ell_{n+1}}(X_{2n+1}, X_{2n+2}; \mathbf{v_{n+1}}, g_{n+1})  |  X_{2n+1},  X_{2n+2}  \right]  
  }
 \\
 && 
  \quad \mathbb{I}[X_{2n+1} < X_{2n+2}]  \Phi_{n}^{\pmb{\ell}}(X_1, \ldots X_{2n+1},  X_{2n+2}, \ldots,  X_2) \prod_{i=1}^n   \Delta^{-\ell_{i}} h_i(X_{2i+1}, X_{2i+2})   \big]  \\
&=& 
 \E \Big[ 
(\mathbf{v}_{n+1, 2n+3} g_{n+1,2n+4} - \mathbf{v}_{n+1, 2n+4} g_{{n+1}, 2n+3})
 (\mathbf{v}_{n+1, 2n+3} g_{n+1,2n+4} - \mathbf{v}_{n+1, 2n+4} g_{{n+1}, 2n+3})'  \\
&& 
   \Phi_p^{\ell_{n+1}}(X_{2n+1}, X_{2n+3}, X_{2n+4}, X_{2n+2})
 \Phi_{n}^{\pmb{\ell}}(X_1, \ldots X_{2n+1},  X_{2n+2}, \ldots,  X_2) \prod_{i=1}^n   \Delta^{-\ell_{i}} h_i(X_{2i+1}, X_{2i+2})   \Big] 
  \end{eqnarray*} 
Again extracting the common factor $\sqrt{{\Delta^{-\ell_{n+1}}h_{n+1}(X_{2n+3})}\Delta^{-\ell_{n+1}}h_{n+1}(X_{2n+4} )}$ 
and re-arranging yields the assertion. 

\end{proof}

\begin{proof}[Proof of Lemma \ref{lma:gamma2cont}]
  Let $x_1 \le x \le x_2$ and $h$ an increasing function.  Direct
  application of the definitions {with \eqref{eq:phiex}} lead to
  \begin{align*}
  p(x) \gamma_k^0 h(x_1, x, x_2) & = \int_{x_1}^x  \int_x^{x_2}
                              \int_{x_3}^x \int_x^{x_4} \cdots
    \int_{x_{2k-3}}^x  \int_x^{x_{2k-2}}
(h(x_{2k})- h(x_{2k-1})h'(x_{2k-1})h'(x_{2k}) \mathrm{d}x_{2k} \mathrm{d}x_{2k-1}\\
    & 
\qquad \qquad     \cdots h'(x_{5})h'(x_{6})\mathrm{d}x_{6} \mathrm{d}x_{5}h'(x_{3})h'(x_{4})\mathrm{d}x_{4} \mathrm{d}x_{3}.
  \end{align*}
  Applying the change of variables $u_k = h(x_k), k=1, \ldots, 2k$ and
  setting $u = h(x)$ we see that the sequence $\gamma_k^0 h$ depends
  only on the iterated integrals
  \begin{align*}
 \iota_k(u_1, u, u_2) & := \int_{u_1}^{u}\int_{u}^{u_2}
                               \int_{u_3}^{u}   \int_{u}^{u_4}  \cdots
    \int_{u_{2k-3}}^u\int_{u}^{u_{2k-2}}
                              (u_{2k}- u_{2k-1})\mathrm{d}u_{2k} \mathrm{d}u_{2k-1}
                               \cdots \mathrm{d}u_{6}
                               \mathrm{d}u_{5}
                               \mathrm{d}u_{4}
                                 \mathrm{d}u_{3}
  \end{align*}
  which we can write recursively as 
  \begin{align*}
 \iota_1(u_1, u, u_2) & = u_2-u_1\\
     \iota_{k}(u_1, u, u_2) & = \int_{u_1}^{u}\int_{u}^{u_2}
                                   \iota_{k-1}(u_3, u, u_4)
                                   \mathrm{d}u_4 \mathrm{d}u_3, \qquad  k \ge
                                   {2}. 
  \end{align*}
  It remains to show that
  \begin{equation}\label{eq:11}
                 \iota_{k}(u_1, u, u_2) = (u_2-u)^{k-1}(u -
                                  u_1)^{k-1}(u_2 - u_1)  \frac{\mathbb{I}[u_1\le u \le
                                u_2]}{k!(k-1)!}                 
                            \end{equation}
                            for all $k \ge 1$.  We proceed by
                            induction on $k$.  Clearly
                            $\iota_1(u_1, u, u_2) =
                            (u_2-u_1)\mathbb{I}[u_1\le u \le u_2]$, as
                            required. Next suppose that \eqref{eq:11}
                            holds.  Then
\begin{align*}
           \iota_{k+1}(u_1, u, u_2) 
  & = \frac{1}{k!(k-1)!}\int_{u_1}^{u}\int_{u}^{u_2}(u_4-u)^{k-1}(u -
                                  u_3)^{k-1}(u_4 - u_3)  \mathrm{d}u_4
    \mathrm{d}u_3 \\
  & = \frac{1}{k!(k-1)!}\int_{u_1}^{u}\int_{u}^{u_2}(u_4-u)^{k}(u -
                                  u_3)^{k-1}  \mathrm{d}u_4
    \mathrm{d}u_3  \\
  & \qquad + \frac{1}{k!(k-1)!}\int_{u_1}^{u}\int_{u}^{u_2}(u_4-u)^{k}(u -
                                  u_3)^{k-1}  \mathrm{d}u_4
    \mathrm{d}u_3 \\
  & = \frac{(u_2-u)^{k+1}(u-u_1)^k + (u_2-u)^{k}(u-u_1)^{k+1}}{(k+1)!k!}
\end{align*}
which leads to the claim. 
 \end{proof}

 \begin{proof}[Proof of Identity \eqref{eq:57cont2}]
Identity \eqref{eq:57cont2} follows 
from Lemma \ref{lma:gamma2cont} by using 
$h(X_2) - h(X_1) = h(X_2) - h(x) + h(x) - h(X_1)$ 
{and $\mathbb{I}[X_1 \le x \le X_2] \mathbb{I}[X_1 \ne X_2]= \mathbb{I}[X_1\le x] \mathbb{I}[X_2 \ge x] \mathbb{I}[X_1 \ne X_2]$} to get
\begin{align}
\Gamma_{k}^{\pmb 0}h(x) 
&= (-1)^{k-1}\frac{1}{p(x)}\mathbb{E}\big[H_x^{k-1}(X)\mathbb{I}[X\le x] \big]  
                                                                 \mathbb{E}\big[H_x^k(X)\mathbb{I}[X\ge x] \big] \nonumber \\
  & \quad +  (-1)^k\frac{1}{p(x)} \mathbb{E}\big[H_x^k(X)\mathbb{I}[X\le x] \big]  
\mathbb{E}\big[H_x^{k-1}(X)\mathbb{I}[X\ge x] \big] \label{eq:gammatrick} \\
&= (-1)^{k-1}\mathbb{E}\big[H_x^{k-1}(X)\big]  \frac{1}{p(x)}
\mathbb{E}\big[H_x^k(X)\mathbb{I}[X\ge x] \big] 
+  (-1)^k \mathbb{E}\big[H_x^k(X) \big]  
\frac{1}{p(x)}\mathbb{E}\big[H_x^{k-1}(X)\mathbb{I}[X\ge x] \big] \nonumber
\end{align}
where the last equality follows from
\begin{equation*}
\mathbb{E}\big[H_x^k(X) \big] = \mathbb{E}\big[H_x^k(X)\mathbb{I}[X\le x]  \big] +\mathbb{E}\big[H_x^k(X)\mathbb{I}[X\ge x]  \big].
\end{equation*}
Upon noting that
{
\begin{eqnarray*}  &&  - \mathcal{L}_p^0 H_x^k(x) \\ &&=\frac{1}{p(x)} \left\{ \mathbb{E}\big[H_x^k(X_2)  \mathbb{I}[X_1 < x < X_2]\big] -  \mathbb{E}\big[H_x^k(X_1)  \mathbb{I}[X_1 < x < X_2]\big] \right\}\\
&&= \frac{1}{p(x)} \left\{ \mathbb{E}\big[H_x^k(X_2)  \mathbb{I}[x < X_2]\big] \mathbb{P}[x > X_1] -  \mathbb{E}\big[H_x^k(X_1)  \mathbb{I}[X_1 < x \big] \mathbb{P}[x < X_2] \right\}
\\
&&= \frac{1}{p(x)} \left\{ \mathbb{E}\big[H_x^k(X_2)  \mathbb{I}[x < X_2]\big]  -  \mathbb{E}\big[H_x^k(X_2)  \mathbb{I}[x < X_2]\big] \mathbb{P}[x < X_1] -  \mathbb{E}\big[H_x^k(X_1)  \mathbb{I}[X_1 < x \big] \mathbb{P}[x < X_2] \right\}
\end{eqnarray*}
wtih $P(x) = \mathbb{P}[X \le x]$ we obtain}
\begin{equation*}
\frac{1}{p(x)} \mathbb{E}\big[H_x^k(X) \mathbb{I}[X\ge x]\big] 
= - \mathcal{L}_p^0 H_x^k(x) 
+   \frac{1-P(x)}{p(x)} \mathbb{E}\big[H_x^k(X) \big], 
\end{equation*}
the required result is obtained after straightforward simplifications {by writing 
\begin{align*}
\Gamma_{k}^{\pmb 0}h(x) 
&= (-1)^{k-1}\mathbb{E}\big[H_x^{k-1}(X)\big]  \frac{1}{p(x)}
\mathbb{E}\big[H_x^k(X)\mathbb{I}[X\ge x] \big] 
+  (-1)^k \mathbb{E}\big[H_x^k(X) \big]  
\frac{1}{p(x)}\mathbb{E}\big[H_x^{k-1}(X)\mathbb{I}[X\ge x] \big] \\
&= (-1)^{k-1}\left( -\mathbb{E}\big[H_x^{k-1}(X)\big]  
 \mathcal{L}_p^0 H_x^k(x) + \mathbb{E}\big[H_x^k(X) \big]   \mathcal{L}_p^0 H_x^{k-1}(x)  \right) \\
&
+ (-1)^{k-1}\frac{1-P(x)}{p(x)}   \left( \mathbb{E}\big[H_x^{k-1}(X)\big]  
\mathbb{E}\big[H_x^k(X) \big]  
-  \mathbb{E}\big[H_x^k(X) \big] \mathbb{E}\big[H_x^{k-1}(X) \big] \right) 
\end{align*}
and noticing that the last term cancels. 
} 
\end{proof}

\begin{proof}[{Proof of Lemma \ref{lma:identitycontdisc}}]
 
  We shall prove that
   \begin{align}
     & \gamma_{k}^{\pmb \ell}  (x_1, x, x_2)  :=  \gamma_{k}^{\pmb \ell} \mathrm{Id}(x_1, x, x_2) 
     = 
(x_{2}-x)_{\{k-1;\pmb\ell\}}  (x-x_{1})^{\{k-1;\pmb\ell\}}(x_{{2}}-x_{1})
\frac{ \mathbb{I}[x_1 + \mathbf{a}_{k}  \le x \le x_2 -
       \mathbf{b}_{k}]}{p(x) k!(k-1)!}.
       \label{eq:57Discrete}
   \end{align}
   The claim is obvious from \eqref{eq:57cont} in the continuous
case. For the discrete case,  the assertion is proved by induction in $k$; the cases $k=1$ and
  $k=2$ need to be asserted to start the induction.  The case $k=1$ is
  immediate.  
  For $k=2$, we show that
  $$\gamma_2^{\ell_1, \ell_2}(X_1,x,X_2) 
  = \frac{1}{2}(x-X_1-a_{\pmb\ell}{(2)}+1)(X_2-x-b_{\pmb\ell}{(2)}+1)(X_2-X_1) \frac{\mathbb{I}[X_1{+a_{\pmb \ell}(2)}\leq x \leq X_2{-b_{\pmb \ell}(2)}]}{p(x)}$$ for $\ell_i\in\{-1,1\}$. 
To this end, from {Proposition} \ref{lma:gamma2} {where we sum over $(x_3,x_4)$ instead of $(y,z)$}, we obtain 
\begin{align*}
\gamma_{2}^{\ell_1,\ell_2}(x_1,x,x_2)
&= \sum_{x_3=x_1+a_1}^{x-a_2}\sum_{x_4=x+b_2}^{x_2-b_1} (x_4-x_3)  \frac{\mathbb{I}[x_1+\mathbf{a}_{2}\leq x\leq x_2-\mathbf{b}_2]}{p(x)} \\
&= \frac{1}{2} (x-x_1-\mathbf{a}_2+1)(x_2-x-\mathbf{b}_2+1)(x_2-x_1) \frac{\mathbb{I}[x_1+\mathbf{a}_{2}\leq x\leq x_2-\mathbf{b}_2]}{p(x)}
\end{align*}
 as required.

To conclude the argument, we prove the identity \eqref{eq:57Discrete} by induction: we suppose the claims hold for $k$ and investigate its validity
for $k+1$. The definition of $\Gamma_k^{\pmb \ell}$ in \eqref{eq:2} gives
\begin{align}
\gamma_{k+1}^{\pmb \ell}(x_1,x,x_2)
&= 
\mathbb{E} \bigg[
\frac{\chi^{\ell_1}(x_{1},X_{3})}{p(X_{3})} \frac{\chi^{-\ell_1}(X_{4},x_{2})}{p(X_{4})}
\gamma_{k}^{\ell_2,\ldots,\ell_{k+1}}(X_3,x,X_4) \bigg] \label{eq:gammak1}
\end{align}
Now we can plug-in the induction assumption \eqref{eq:57Discrete} into \eqref{eq:gammak1}:
\begin{eqnarray*}
  \lefteqn{\gamma_{k+1}^{\pmb \ell}(x_1,x,x_2)} \\
  &=& \E\bigg[
      (X_{4}-x-\mathbf{b}'_{k}+1)^{[k-1]}  (x-X_{3}-\mathbf{a}'_k+1)^{[k-1]}(X_4-X_3) \frac{ \mathbb{I}[X_3 + \mathbf{a}'_{k}  \le x \le X_4 - \mathbf{b}'_{k}]}{p(x) k!(k-1)!} \\
      &&\quad \frac{\chi^{\ell_1}(x_{1},X_{3})}{p(X_{3})} \frac{\chi^{-\ell_1}(X_{4},x_{2})}{p(X_{4})}\bigg] \\
  &=& \sum_{x_3=x_1+a_1}^{x-\mathbf{a}'_k}
      \sum_{x_4=x+\mathbf{b}'_k}^{x_2-b_1} 
      (x_{4}-x-\mathbf{b}'_{k}+1)^{[k-1]}(x-x_{3}-\mathbf{a}'_k+1)^{[k-1]}(x_4-x_3) 
      \frac{\mathbb{I}[x_1+\mathbf{a}_{k+1}\leq x\leq x_2-\mathbf{b}_{k+1}]}{p(x)} \\
  &=& (x_2-x-\mathbf{b}_{k+1}+1)^{[k]}(x-x_1-\mathbf{a}_{k+1}+1)^{[k]}(x_2-x_1) 
      \frac{\mathbb{I}[x_1+\mathbf{a}_{k+1}\leq x\leq x_2-\mathbf{b}_{k+1}]}{p(x)}   
\end{eqnarray*}

where $\mathbf{a}'_{k}=\sum_{i=2}^{k+1}a_i$ and  $\mathbf{b}'_{k}=\sum_{i=2}^{k+1}b_i$.
\end{proof}

\begin{proof}[Proof of Lemma \ref{lma:cdf}] 
By Lemma \ref{lma:gamma2cont} {and \eqref{eq:gammatrick}}, we have
\begin{align*}
\Gamma_k^0P(x) 
=& \frac{1}{p(x)k!(k-1)!}\E\left[ (P(x)-P(X_1))^{k-1}\mathbb{I}[X_1\leq x] \right]
\E\left[ (P(X_2)-P(x))^{k}\mathbb{I}[X_2\geq x] \right]  \\
& + \frac{1}{p(x)k!(k-1)!}\E\left[ (P(x)-P(X_1))^{k}\mathbb{I}[X_1\leq x] \right]
\E\left[ (P(X_2)-P(x))^{k-1}\mathbb{I}[X_2\geq x] \right].
\end{align*}
Moreover, {using integration by substitution},
\begin{align*}
\E\left[ (P(x)-P(X_1))^{k}\mathbb{I}[X_1\leq x] \right]
&{= \int_a^x (P(x)-P(x_1))^{k} p(x_1) dx_1 
= -\int_{P(x)}^0 u^k du}
= \frac{P(x)^{k+1}}{k+1} \\
\E\left[ (P(X_2)-P(x))^{k}\mathbb{I}[X_2\geq x] \right]
&{= \int_x^b (P(x_2)-P(x))^{k} p(x_2) dx_2 
= \int_0^{1-P(x)} u^k du}
= \frac{(1-P(x))^{k+1}}{k+1},
\end{align*}
and the conclusion follows. 
\end{proof}

\medskip
\begin{proof}[Proof of Proposition \ref{prop:perason}] 
The argument for the integrated Pearson system is inspired from \cite[Theorem 2]{johnson1993note}.  
By Lemma \ref{lma:gamma2cont}, note that 
\begin{align*}
\gamma_k^0(x_1,x,x_2) &= 
(x-x_{1})^{k-1}(x_{2}-x)^{k-1}(x_{2}-x_{1}) \frac{\mathbb{I} [x_1 \le x \le x_2]}{p(x)k!(k-1)!} \\
& = (x-x_{1})^{k-1}(x_{2}-x)^{k-1}(x_{{2}}-\mu+\mu -x_{1}) \frac{\mathbb{I} [x_1 \le x]\mathbb{I}[x\le x_2]}{p(x)k!(k-1)!} 
\end{align*}
Therefore, $\Gamma_k^0(x)$ can be decomposed using simple expectations: 
\begin{align}
\Gamma_k^0(x) =&\frac{1}{p(x)k!(k-1)!}\Bigg( \mathbb{E}\left[(x-X_{1})^{k-1}\mathbb{I} [X_1 \le x]\right]\mathbb{E}\left[(X_{{2}}-\mu)(X_{2}-x)^{k-1}\mathbb{I}[x\le X_2]\right] \nonumber\\
&+ \mathbb{E}\left[(\mu-X_{1})(x-X_{1})^{k-1}\mathbb{I} [X_1 \le x]\right]\mathbb{E}\left[(X_{2}-x)^{k-1}\mathbb{I}[x\le X_2]\right] \Bigg) \label{eq:gammaP}
\end{align}
{In the continuous setting, the Stein kernel $\tau_p$ is such that is satisfies for $X \sim p$ with mean $\mu$ and differentiable $f$ such that the expectations exist, 
$$ \mathbb{E} [ (X - \mu) f(X) ]= \mathbb{E} [ \tau_p(X) f'(X)].$$}
Integrating by parts we {thus} obtain
\begin{align*}
\mathbb{E} \left[(X_2-\mu) (X_2-x)^{k-1} \mathbb{I} [X_2 \ge x ]\right]
&= \mathbb{E} \left[ \tau_p(X_2)(k-1)(X_2-x)^{k-2} \mathbb{I} [X_2 \ge x ]\right] 
\end{align*}
and
\begin{align*}
\mathbb{E} \left[(\mu-X_1) (x-X_1)^{k-1} \mathbb{I} [X_1 \le x ]\right]
&= \mathbb{E} \left[ \tau_p(X_1)(k-1)(x-X_1)^{k-2} \mathbb{I} [X_1 \le x ]\right].
\end{align*}
When we plug it into \eqref{eq:gammaP}, we get 
\begin{align*}
\Gamma_k^0(x) &= \frac{k-1}{p(x)k!(k-1)!} 
\mathbb{E} \Bigg[ (x-X_{1})^{k-2}(X_{2}-x)^{k-2}(\tau_p(X_2)(x-X_1)+ \tau_P(X_1)(X_2-x)  {)} \mathbb{I} [X_1 \le x \le X_2]\Bigg].
\end{align*}
Using the particular form of $\tau_p$ for the integrated Pearson family, Taylor expansion of $\tau_p(X) $ around $x$ gives 
\begin{align*}
(x-x_1)\tau_p(x_2)+(x_2-x)\tau_p(x_1) = 
\tau_p(x)(x_2-x_1) + \frac{\tau_p''(x)}{2}(x-x_1)(x_2-x)(x_2-x_1)
\end{align*}
Therefore, 
\begin{align*}
\Gamma_k^0(x)
=&  \frac{k-1}{k!(k-1)!} \frac{1}{p(x)} 
\mathbb{E} \Big[ (x-X_{1})^{k-2} (X_2-x)^{k-2} \mathbb{I} [X_1 \leq x \leq X_2 ]\\
&\quad \left(\tau_p(x)(X_2-X_1) + \frac{\tau_p''(x)}{2}(x-X_1)(X_2-x)(X_2-X_1)\right)
\Big]   \\
=& \frac{\tau_p(x)}{k} \Gamma_{k-1}^0(x) + \frac{\tau_p''(x)(k-1)}{2} \Gamma_k^0(x) \\
=& \frac{1}{k\left(1-\frac{k-1}{2}\tau_p''(x)\right)} \tau_p(x) \Gamma_{k-1}^0(x)
\end{align*}
The assertion follows from {iterating this expression and using} $\Gamma_1^0(x)=\tau_p(x)$ and $\tau_p''(x)=2\delta$. 
\end{proof}

\begin{proof}[Proof of Proposition \ref{prop:perasondisc}] 
By induction, we only have to prove
the relation with respect to $\ell_{k+1}$, i.e.,
\begin{equation}
\Gamma_{k+1}^{\pmb \ell,1} (x) = \frac{\tau_p^+(x-\mathbf{a}_{k})}{(k+1)(1- k \delta)} \Gamma_{k}^{\pmb \ell} (x)  \text{ and }
\Gamma_{k+1}^{\pmb \ell,-1} (x) = \frac{\tau_p^-(x+\mathbf{b}_{k})}{(k+1)(1- k \delta)} \Gamma_{k}^{\pmb \ell} (x).
\nonumber
\end{equation}
The following argument is inspired from \cite{APP07}. 
Using \eqref{eq:57Discrete} and a similar proof as in the Pearson case (Proposition \ref{prop:perason}), we may rewrite $\Gamma_{k+1}^{\pmb\ell,1}(x)$ using simple expectations: 
\begin{align}
&\Gamma_{k+1}^{\pmb\ell,1}(x) 
= 
\frac{1}{p(x)}\frac{1}{k!(k+1)!}\Bigg(\nonumber\\
& \quad\quad \quad \quad \mathbb{E}\left[(x-X_1-\mathbf{a}_{k})^{[k]}\mathbb{I}[X_1 + \mathbf{a}_{k}+1  \le x]\right]  
\mathbb{E}\left[(X_2-\mu)(X_{2}-x-\mathbf{b}_{k} +1)^{[k]} \mathbb{I}[x \le X_2 - \mathbf{b}_{k}]\right] \nonumber\\
&\quad \quad \quad \quad + \mathbb{E}\left[(\mu-X_1)(x-X_{1}-\mathbf{a}_{k})^{[k]}\mathbb{I}[X_1 + \mathbf{a}_{k}+1  \le x]\right] \mathbb{E}\left[(X_{2}-x-\mathbf{b}_{k} +1)^{[k]} \mathbb{I}[x \le X_2 - \mathbf{b}_{k}]\right]
\Bigg). \label{eq:gammaO}
\end{align}
{With the notation \eqref{eq:discrprod1} is it straightforward to verify that  
for all
  $x$ we have}
   \begin{equation}\label{lma:diffcal}
    \Delta^\ell \left( f^{[k]}(x) \right) = f^{[k-1]}(x+a_{\ell}) \sum_{j=0}^{k-1}
    \Delta^\ell f(x+j).
  \end{equation}
In particular, for all
  $x,a$, we have 
\begin{align*}
  \Delta^-\left( (x-a+1)^{[k]} \mathbb{I}[x\geq a] \right)&  = k (x-a+1)^{[k-1]} \mathbb{I}[x\geq a]  
  \\
  \Delta^+\left( (a+1-x)^{[k]} \mathbb{I}[x\leq a] \right)&  = -k (a+1-x)^{[k-1]} \mathbb{I}[x\leq a] 
  \\
  \Delta^-\left( (a-x)^{[k]} \mathbb{I}[x< a] \right)&  = -k (a-x+1)^{[k-1]} \mathbb{I}[x\leq a] 
\end{align*}
{{The Stein kernel $\tau_p^\ell$ for discrete distributions satisfies  for $X \sim p$ with mean $\mu$ and functions $f$ such that the expectations exist, 
$$ \mathbb{E} [ (X - \mu) f(X) ]=
 \mathbb{E} [ \tau_p^\ell(X) \Delta^{-\ell} f(X-\ell)],$$
see for example \cite{ley2017stein}. 
}}
Hence, with \eqref{lma:diffcal}, we may use the {discrete integration by parts} formula to rewrite
\begin{align*}
\mathbb{E} \bigg[(X_2-\mu)(X_{2}-x-\mathbf{b}_{k}+1)^{[k]} \mathbb{I}{[x \le X_2 - \mathbf{b}_{k}]} \bigg] 
=& k \mathbb{E} \bigg[\tau_p^+(X_2) (X_{2}-x-\mathbf{b}_{k}+1)^{[k-1]}  \mathbb{I}{[x \le X_2 - \mathbf{b}_{k}]} \bigg]  
\end{align*}
and
\begin{align*}
\mathbb{E} \bigg[(\mu-X_1)(x-X_1-\mathbf{a}_{k})^{[k]}  \mathbb{I}[X_1 \le x - \mathbf{a}_{k}-1] \bigg] 
=& \mathbb{E} \bigg[(\mu-X_1)(x-X_1-\mathbf{a}_{k})^{[k]}  \mathbb{I}{[X_1 \le x - \mathbf{a}_{k}]} \bigg] \\
=& k \mathbb{E} \bigg[\tau_p^+(X_1) (x-X_1-\mathbf{a}_{k}+1)^{[k-1]}  \mathbb{I}{[X_1 \le x - \mathbf{a}_{k}]} \bigg]  .
\end{align*}
After plugging these equations into \eqref{eq:gammaO} and some further algebraic developments (which we omit), we obtain
\begin{align*}
\Gamma_{k+1}^{\pmb \ell,1} (x) 
=&\frac{1}{p(x)}\frac{1}{k!(k+1)!} \Bigg(
k \tau_p^+(x-\mathbf{a}_{k}) \\
&\mathbb{E}\bigg[ (x-X_1-\mathbf{a}_{k}+1)^{[k-1]}(X_2-x-\mathbf{b}_{k}+1)^{[k-1]}
(X_2-X_1)  \mathbb{I}{[X_1+\mathbf{a}_{k} \leq x \leq X_2 - \mathbf{b}_{k}]} \bigg] \\
&+  \delta k \mathbb{E}\bigg[ (X_2-X_1)(x-X_1-\mathbf{a}_{k})(X_2-x+k-\mathbf{b}_{k}) \mathbb{I}{[X_1+\mathbf{a}_{k}+1 \leq x \leq X_2 - \mathbf{b}_{k}]} \bigg] \Bigg) \\
=& \frac{\tau_p^+(x-\mathbf{a}_{k})}{k+1}\Gamma_{k}^{\pmb \ell} (x) 
 + \delta k \Gamma_{k+1}^{\pmb \ell,1} (x)
\end{align*}
which gives the assertion. The same result can easily be obtained for $\Gamma_{k+1}^{\pmb \ell,-1} (x) $.
\end{proof}

\end{document}